\documentclass[12pt,reqno]{amsart}
\usepackage[a4paper,margin=2.5cm,top=2.5cm,bottom=2.5cm,centering,headheight=3ex,headsep=4ex,vcentering]{geometry}

\usepackage{amssymb,amsfonts,amsmath,amsthm}
\usepackage{xcolor}
\usepackage{hyperref}
\usepackage{orcidlink}

\definecolor{equationblue}{HTML}{244A73}
\definecolor{citationgreen}{HTML}{276347}
\definecolor{urlteal}{HTML}{176B75}

\hypersetup{
    colorlinks=true,
    citecolor=green,
    linkcolor=blue,
    urlcolor=blue
}
\usepackage{breqn}
\usepackage{microtype}
\usepackage{mlmodern}

\theoremstyle{plain}

\numberwithin{equation}{section}

\newtheorem{theorem}{Theorem}[section]
\newtheorem{lemma}[theorem]{Lemma}

\newtheorem{proposition}[theorem]{Proposition}

\theoremstyle{definition}

\usepackage{fancyhdr}

\newcommand{\ignore}[1]{}
\newcommand{\assertionproof}[1]{%
  \par\medskip\noindent
  \textbf{Proof of (#1).}\enspace
  \ignorespaces}
\newlength{\auxindent}

\newcommand{\auxidentity}[1]{%
  \par\begingroup
  \raggedright
  \baselineskip=1.25\baselineskip
  \binoppenalty=0
  \relpenalty=10000
  \medmuskip=5mu
  \thickmuskip=6mu
  \let\auxsavedcdot\cdot
  \renewcommand{\cdot}{\nobreak\auxsavedcdot\nobreak}%
  \auxidentitysplit#1\auxidentityend
  \par\endgroup
  \vspace{8pt}
}

\def\auxidentitysplit#1=#2\auxidentityend{%
  \settowidth{\auxindent}{\(\displaystyle #1={}\)}%
  \hangindent=\auxindent
  \hangafter=1
  \noindent\(\displaystyle #1={}#2\)%
}
\title[Partitions with colored odd parts]{Congruences modulo powers of $7$ for partitions with colored odd parts}

\author[S. Maity]{Saikat Maity \orcidlink{0009-0004-7188-471X}}
\address[S. Maity]{Mathematical and Physical Sciences division, School of Arts and Sciences, Ahmedabad University, Ahmedabad 380009, Gujarat, India}
\email{saikat.maity@ahduni.edu.in}

\thanks{Corresponding Author: Saikat Maity (saikat.maity@ahduni.edu.in)}

\author[M. P. Saikia]{Manjil P. Saikia \orcidlink{0000-0002-2997-6731}}
\address[M. P. Saikia]{Mathematical and Physical Sciences division, School of Arts \& Sciences, Ahmedabad University, Navrangpura, Ahmedabad 380009, Gujarat, India}
\email{manjil.saikia@ahduni.edu.in}

\author[P. Talukdar]{Prabal Talukdar \orcidlink{0009-0006-3713-1348}}
\address[P. Talukdar]{Mathematical and Physical Sciences Division, School of Arts \& Sciences, Ahmedabad University, Navrangpura, Ahmedabad 380009, Gujarat, India}
\email{prabaltalukdar89@gmail.com}

\keywords{colored partitions; partition congruences; eta quotients; modular functions; $U_7$ operator}

\subjclass[2020]{Primary 11P83; Secondary 05A17, 11F20}

\begin{document}
\raggedbottom
\allowdisplaybreaks[1]

\begin{abstract}
Let $a_k(n)$ denote the number of partitions of $n$ in which even parts occur in one color and odd parts may occur in $k$ colors. We prove infinite families of congruences modulo powers of $7$ for $a_k(n)$ when $k\in\{4,5,7\}$. For each positive integer $\alpha$, let $\lambda_{k,\alpha}$ be the least nonnegative solution of $24\lambda_{k,\alpha}\equiv-(k-2)\pmod{7^\alpha}$. Then, we show that
\[
a_k(7^\alpha n+\lambda_{k,\alpha})
\equiv0\pmod{7^{\lfloor(\alpha+1)/2\rfloor}}
\]
for every nonnegative integer $n$. These results extend congruences modulo $7$ of Hirschhorn and Sellers (2026) and complement Chen’s (2025) congruences for the reciprocal crank parity function $a_3(n)$. %The proof uses a degree-seven modular equation, eta-quotient identities, and an alternating $U_7$-operator construction that controls the $7$-adic divisibility of successive coefficient extractions.
\end{abstract}

\maketitle

\section{Introduction}\label{sec:intro}

A partition of a positive integer $n$ is a finite non-increasing sequence of positive integers whose sum is $n$. Let $p(n)$ denote the number of partitions of $n$, with the convention that $p(0)=1$. For example, $p(4)=5$, since the partitions of $4$ are
\[
4,\qquad 3+1,\qquad 2+2,\qquad 2+1+1,\qquad 1+1+1+1.
\]
Since they first appeared, partitions have captured the imagination of mathematicians, new and old. For further background on the theory of partitions, interested readers may refer to Andrews~\cite{Andrews1998}.

Throughout this paper, $q$ is a formal variable; when convergence is relevant, we assume that $q\in\mathbb{C}$ and $|q|<1$. We use the standard notation
\[
(a;q)_\infty := \prod_{j=0}^{\infty}(1-aq^j),
\qquad
f_k := (q^k;q^k)_\infty = \prod_{j=1}^{\infty}(1-q^{kj})
\quad (k\geq 1).
\]
The partition function $p(n)$ has the generating function
\[
\sum_{n=0}^{\infty}p(n)q^n
=
\prod_{j=1}^{\infty}\frac{1}{1-q^j}
=
\frac{1}{f_1}.
\]
Among the most celebrated results in all of mathematics, we have Ramanujan's~\cite{Ramanujan1919} three classical congruences for the partition function: for every integer $n\geq 0$,
\[
\begin{aligned}
p(5n+4)  &\equiv 0 \pmod{5},\\
p(7n+5)  &\equiv 0 \pmod{7},\\
p(11n+6) &\equiv 0 \pmod{11}.
\end{aligned}
\]

To give a combinatorial explanation of these congruences, Dyson~\cite{Dyson1944} defined the \textit{rank} of a partition as its largest part minus its number of parts. He conjectured that grouping partitions by their ranks modulo $5$ and $7$ gives five equal classes for partitions of $5n+4$ and seven equal classes for partitions of $7n+5$, respectively. The rank does not give a similar explanation of the congruence modulo $11$. Dyson therefore suggested that another quantity, which he named the \textit{crank}, might explain this congruence. After several decades of search, Andrews and Garvan~\cite{AndrewsGarvan1988} gave an explicit definition of the crank in 1988.

For a partition $\pi$, let $\ell(\pi)$ be its largest part, $\omega(\pi)$ the number of ones, and $\mu(\pi)$ the number of parts greater than $\omega(\pi)$. The crank of $\pi$ is defined by
\[
\operatorname{crank}(\pi):=
\begin{cases}
\ell(\pi), & \text{if } \omega(\pi)=0,\\
\mu(\pi)-\omega(\pi), & \text{if } \omega(\pi)>0.
\end{cases}
\]
Let $M_e(n)$ and $M_o(n)$ denote the numbers of partitions of $n$ with even and odd crank, respectively. For $n\geq 1$, define the crank parity function by $M(n):=M_e(n)-M_o(n)$, and set $M(0)=1$. Taking $z=-1$ in Garvan's generating function~\cite[Eq.~(3.8)]{Garvan1988} and using \cite[Theorem~8]{Garvan1988}, with the case $n=1$ treated separately, we can prove
\begin{equation}\label{eq:gf_crank_parity}
\sum_{n=0}^{\infty}M(n)q^n
=
2q+\frac{(q;q)_\infty}{(-q;q)_\infty^2}
=
2q+\frac{f_1^3}{f_2^2}.
\end{equation}

Taking the reciprocal of the infinite product in \eqref{eq:gf_crank_parity} gives the generating function for the sequence $a(n)$ studied by Amdeberhan and Merca~\cite{AmdeberhanMerca2025}:
\[
\sum_{n=0}^{\infty}a(n)q^n
=
\frac{(-q;q)_\infty^2}{(q;q)_\infty}
=
\frac{f_2^2}{f_1^3}.
\]
Among the partition interpretations given by Amdeberhan and Merca, $a(n)$ counts partitions of $n$ in which even parts appear in one color (i.e., are monochromatic), while odd parts may appear in three colors. Hirschhorn and Sellers~\cite{HS25} considered the corresponding generalization with any fixed number of colors for the odd parts.

For a fixed integer $k\geq 1$, let $a_k(n)$ denote the number of partitions of $n$ in which even parts appear in one color, while each odd part may be assigned one of $k$ colors. We set $a_k(0)=1$. For example, $a_4(2)=11$, with the eleven colored partitions being
\[
\begin{gathered}
2,\qquad 1_1+1_1,\qquad 1_2+1_2,\qquad 1_3+1_3,\qquad 1_4+1_4,\\[6pt]
1_1+1_2,\qquad 1_1+1_3,\qquad 1_1+1_4,\qquad 1_2+1_3,\qquad 1_2+1_4,\qquad 1_3+1_4.
\end{gathered}
\]
Here the subscripts indicate colors. The generating function for $a_k(n)$ is
\begin{equation}\label{eq:gf_ak}
  \sum_{n=0}^{\infty} a_k(n) q^n = \frac{(q^2;q^2)_\infty^{k-1}}{(q;q)_\infty^k} = \frac{f_2^{ k-1}}{f_1^{ k}}.
\end{equation}
For $k=1,2,3$, this family gives
\[
a_1(n)=p(n),\qquad
a_2(n)=\overline{p}(n),\qquad
a_3(n)=a(n),
\]
where $\overline{p}(n)$ denotes the number of overpartitions of $n$, namely partitions in which the first occurrence of each distinct part may be overlined~\cite{CL04}. Thus, $a_k(n)$ places the ordinary partition, overpartition, and reciprocal crank parity functions in a common setting.

Hirschhorn and Sellers \cite[Theorem 1.3]{HS25} established congruences modulo $7$ for $a_4(7n+4), a_5(7n+6)$, and $a_7(7n+3)$. Chen~\cite[Theorem~1.5]{Chen25} subsequently proved an infinite family of congruences modulo powers of $7$ for $a_3(n)$. In this paper, we adapt the modular-function approach to $a_4(n), a_5(n)$, and $a_7(n)$, extending the three congruences of Hirschhorn and Sellers to infinite families modulo powers of $7$. Our main result is the following.
\begin{theorem}\label{thm:main}
Let $k\in\{4,5,7\}$. For each integer $\alpha\geq 1$, let $\lambda_{k,\alpha}$ be the least nonnegative integer satisfying
\[
24\lambda_{k,\alpha}\equiv -(k-2)\pmod{7^\alpha}.
\]
Then, for every integer $\alpha\geq 1$ and $n\geq 0$,
\[
a_k(7^\alpha n+\lambda_{k,\alpha})
\equiv 0\pmod{7^{\lfloor(\alpha+1)/2\rfloor}}.
\]
\end{theorem}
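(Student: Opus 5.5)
We will follow the modular-function method of Paule--Radu and Chen, working on $\Gamma_0(14)$, where the relevant Hauptmodul and $U_7$ machinery are available. Write $k=4,5,7$ and put
\[
\Phi_k(q)=q^{(k-2)/24}\frac{f_2^{k-1}}{f_1^{k}}=\frac{\eta(2\tau)^{k-1}}{\eta(\tau)^k}.
\]
The condition $24\lambda_{k,\alpha}\equiv-(k-2)\pmod{7^\alpha}$ says exactly that extracting the terms $q^{7^\alpha n+\lambda_{k,\alpha}}$ corresponds to applying $U_7$ repeatedly to $\Phi_k$, after multiplying by suitable eta-quotient correction factors.

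Concretely, we define $L_0=1$. For odd $\alpha$ we set
\[
L_{\alpha}=U_7\bigl(A\,L_{\alpha-1}\bigr),\qquad A=\frac{q^{c}f_{49}^{k}f_{2}^{\,k-1}}{f_{98}^{\,k-1}f_1^{k}}\ \text{(appropriately normalized)},
\]
and for even $\alpha$ we set $L_\alpha=U_7(L_{\alpha-1})$. The exponents are chosen so that each $L_\alpha$ is a modular function on $\Gamma_0(14)$ (checked via Newman's/Ligozat's criteria) and equals
\[
\frac{f_{98}^{\,k-1}}{f_{7}^{\,k}}\sum_n a_k(7^\alpha n+\lambda_{k,\alpha})q^{n+\cdots}
\]
for odd $\alpha$, and the analogous expression with $f_{14},f_1$ for even $\alpha$. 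The alternation between the two forms is what makes the recursion close.

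Next, we take a Hauptmodul $t$ for $\Gamma_0(14)$ with a pole only at a single cusp; the degree-seven modular equation for $t$ gives the relation
\[
t(\tau/7)^7+\sum_{j=0}^{6}s_j(\tau)\,t(\tau/7)^j=0,
\]
where the $s_j$ are polynomials in $t$ with coefficients having explicit $7$-adic valuations. From this relation we derive, for each $i$, a representation of $U_7(t^i)$ and of $U_7(A\,t^i)$ as $\mathbb{Z}$-linear combinations
\[
\sum_j m(i,j)\,t^j,
\]
possibly times a fixed eta-quotient prefactor $p_0$, with
\[
\nu_7\bigl(m(i,j)\bigr)\ge\left\lfloor\frac{7j-i+\gamma}{\delta}\right\rfloor
\]
for explicit constants $\gamma,\delta$. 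This is proved by induction on $i$ using the seven-term recurrence inherited from the modular equation, after checking finitely many initial cases $0\le i\le 6$ by explicit computation of $q$-expansions. Sturm's bound makes each such case a finite check.

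With these bounds in hand, we prove by induction on $\alpha$ that $L_\alpha\in 7^{\lfloor(\alpha+1)/2\rfloor}\cdot V$. Here $V$ is the set of series $\sum_{j\ge1}r_jt^j$ (times the prefactor) whose coefficients satisfy $\nu_7(r_j)\ge\lfloor (\text{linear in } j)\rfloor$. The base step computes $L_1$ explicitly and shows $7\mid L_1$, which recovers the Hirschhorn--Sellers congruence. The step from odd to even $\alpha$ preserves the power of $7$. The step from even to odd $\alpha$ gains a factor of $7$ thanks to the valuation bounds applied to $j\ge1$. Reading off coefficients then gives the theorem.

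The main obstacle is the computational core: finding the correct correction factor $A$ and prefactor for each $k$ so that everything lives on $\Gamma_0(14)$ with poles only at one cusp, and then establishing the uniform lower bounds for $\nu_7(m(i,j))$ for both $U_7(t^i)$ and $U_7(At^i)$. We would first attempt to reuse Chen's setup for $k=3$, since $A$ differs only by powers of $f_1/f_{49}$ and $f_2/f_{98}$. We would then verify the initial fundamental relations by machine computation of $q$-expansions up to the Sturm bound.
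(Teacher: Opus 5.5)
Your overall architecture matches the paper's. You alternate $U_7(A_k\,\cdot)$ with $U_7$, derive fundamental relations from a degree-seven modular equation plus seven machine-checked initial cases, and run an induction in which the even-to-odd step gains a factor of $7$ and the odd-to-even step preserves it.

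\textbf{The gap is in the foundation.} There is no Hauptmodul for $\Gamma_0(14)$. $X_0(14)$ has genus $1$ (index $24$, no elliptic points, four cusps, so $g=1+2-2=1$), hence no modular function on $\Gamma_0(14)$ has degree one. Moreover, the ring of functions on $\Gamma_0(14)$ with poles confined to one cusp is not a one-variable polynomial ring. So nothing forces the coefficients $s_j$ of your modular equation to be polynomials in $t$, and the seven-term recurrence you want to run inside $\mathbb{Z}[t]+p_0\mathbb{Z}[t]$ is not available as stated.

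The paper avoids this by taking $t=\eta(7\tau)^4/\eta(\tau)^4$, a Hauptmodul for the genus-zero group $\Gamma_0(7)$. Its modular equation (Theorem~\ref{thm:modular_equation}) has explicit $7$-adic coefficients. Since $[\Gamma_0(7):\Gamma_0(14)]=3$, functions on $\Gamma_0(14)$ must then be expanded over $\mathbb{Z}[t,t^{-1}]$ in a rank-three module with basis $1$, $p_0=q f_{14}^4f_1^4/(f_7^4f_2^4)$ and $p_1=\frac17\bigl(f_{14}f_1^7/(f_7f_2^7)-1\bigr)$. A single prefactor cannot span this module.

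This also undermines your single invariant class $V$ of series $\sum_{j\ge1}r_jt^j$. With the correct basis the iterates have nonzero low-order terms. Already $U_{A_4}(1)$ contains $9\cdot7-9\cdot7\,p_1$ at $t^0$, and for $k=7$ terms in $t^{-1}$ occur.

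The paper therefore uses two classes: $X_B$ after odd steps and $X_A$ after even steps. Each has three components carrying different weights $\lfloor(7j+r)/4\rfloor$, together with explicit $7$-divisibility requirements at $t^{-1}$ and $t^0$. The paper proves
\[
U_{A_k}(1)\in X_B,\qquad U_7(X_B)\subseteq X_A,\qquad U_{A_k}(X_A)\subseteq 7X_B.
\]
These low-order terms are exactly where separate checks on the initial identities are required, because the general order and valuation bounds give no $7$-divisibility there.

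You also leave the $k=7$ normalization open. The paper needs an extra factor $q$ at odd steps and $B_7=1/q$, and "appropriately normalized" does not settle this. Until $t$, the basis, and the two invariant classes are specified correctly, the valuation bounds you propose cannot even be formulated. As written, the argument does not go through.
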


Further arithmetic properties of this family and its generalizations have been investigated recently by Johnson, Sellers, and Fathima \cite{JSF}, who established congruences modulo powers of $2$ and modulo $11$; Thejitha, Sellers, and Fathima \cite{TSF26}, who studied a two-parameter extension allowing separate numbers of colors for even and odd parts; and Jameson and Sellers \cite{JS26}, who developed a uniform framework for congruences modulo $3$.

The remainder of this paper is organized as follows. Section~\ref{sec:preliminaries} gives the modular equation and the properties of the $U_p$ operator needed in the proof. In Section~\ref{sec:auxiliary_functions}, we introduce the auxiliary functions and state the divisibility result from which Theorem~\ref{thm:main} follows. We prove this result by induction in Section~\ref{sec:proof_main} and then deduce the theorem. Appendices~\ref{app:a4}--\ref{app:a7} contain the modular identities for $a_4(n)$, $a_5(n)$, and $a_7(n)$, respectively, preceded by an account of their discovery and verification.

\section{Preliminaries}\label{sec:preliminaries}
We begin with some definitions and results that will be used in the proof of Theorem~\ref{thm:main}.

Let $\mathbb{H}$ denote the upper half-plane and put $q=e^{2\pi i\tau}$ for $\tau\in\mathbb{H}$. The Dedekind eta function is defined by
\[
\eta(\tau):=q^{1/24}\prod_{n=1}^{\infty}(1-q^n)
=q^{1/24}f_1.
\]
For a positive integer $N$, an eta quotient is a function of the form
\[
\prod_{\delta\mid N}\eta(\delta\tau)^{r_\delta},
\qquad r_\delta\in\mathbb{Z}.
\]

We define the eta quotient
\begin{equation}\label{eq:def_t}
t=t(\tau):=\frac{\eta(7\tau)^4}{\eta(\tau)^4}
=q\frac{f_7^4}{f_1^4},
\end{equation}
which is a Hauptmodul for $\Gamma_0(7)$~\cite[Table~2]{Maier2009}. The relation between $t(\tau)$ and $t(7\tau)$ is given by the following modular equation~\cite[Theorem~2.6]{CCG24}.

\begin{theorem}\label{thm:modular_equation}
Define
\[
\begin{aligned}
a_0(t)&=t,\\
a_1(t)&=7^2t^2+4\cdot7t,\\
a_2(t)&=7^4t^3+4\cdot7^3t^2+46\cdot7t,\\
a_3(t)&=7^6t^4+4\cdot7^5t^3+46\cdot7^3t^2+272\cdot7t,\\
a_4(t)&=7^8t^5+4\cdot7^7t^4+46\cdot7^5t^3+272\cdot7^3t^2+845\cdot7t,\\
a_5(t)&=7^{10}t^6+4\cdot7^9t^5+46\cdot7^7t^4+272\cdot7^5t^3+845\cdot7^3t^2+176\cdot7^2t,\\
a_6(t)&=7^{12}t^7+4\cdot7^{11}t^6+46\cdot7^9t^5+272\cdot7^7t^4+845\cdot7^5t^3+176\cdot7^4t^2+82\cdot7^2t,
\end{aligned}
\]
where $t=t(\tau)$ is defined in \eqref{eq:def_t}. Then
\[
t(\tau)^7-\sum_{l=0}^{6}a_l\bigl(t(7\tau)\bigr)t(\tau)^l=0.
\]
\end{theorem}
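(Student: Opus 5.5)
The plan is to realize $t(\tau)$ as one root of a degree-seven polynomial whose coefficients are modular functions on $\Gamma_0(7)$. Since $t$ is a Hauptmodul for $\Gamma_0(7)$, each coefficient is then a polynomial in $t$, and finitely many $q$-expansion coefficients pin it down. To do this, replace $\tau$ by $\tau/7$. The statement is then equivalent to saying that $X=t(\tau/7)$ is a root of
\[
X^7-\sum_{l=0}^{6}a_l\bigl(t(\tau)\bigr)X^l .
\]
So I would consider the seven functions $x_j(\tau):=t\bigl((\tau+j)/7\bigr)$ for $0\le j\le 6$ and the polynomial $\prod_{j=0}^{6}\bigl(X-x_j(\tau)\bigr)$. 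Its coefficients are $(-1)^{7-l}e_{7-l}(x_0,\dots,x_6)$, where $e_i$ is the $i$-th elementary symmetric function. The goal is to show that
\[
(-1)^{6-l}e_{7-l}(x_0,\dots,x_6)=a_l\bigl(t(\tau)\bigr).
\]

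\textbf{Step 1: invariance.} Write $x_j=t\circ\beta_j$ with $\beta_j=\begin{pmatrix}1&j\\0&7\end{pmatrix}$. For $\gamma\in\Gamma_0(7)$, each $\beta_j\gamma$ can be written as $\gamma'\beta_{j'}$ with $\gamma'\in\Gamma_0(7)$, and $j\mapsto j'$ is a permutation of $\{0,\dots,6\}$. This is the usual coset argument for the Hecke operator $U_7$ at level $7$: the lower-left entry of $\beta_j\gamma\beta_{j'}^{-1}$ remains divisible by $7$ because $7\mid N$. Hence the $e_i$ are invariant under $\Gamma_0(7)$. Equivalently, $\sum_j x_j^m = 7\,U_7(t^m)$, so the power sums, and therefore the $e_i$, lie in the image of $U_7$ on modular functions for $\Gamma_0(7)$.

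\textbf{Step 2: poles at the two cusps.} The group $\Gamma_0(7)$ has only the cusps $\infty$ and $0$.

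At $\infty$, $t$ has a simple zero, and each $x_j$ is $O(q^{1/7})$. Hence each $e_i$ is holomorphic at $\infty$ and, being a $q$-series in integral powers of $q$, vanishes there. So each $e_i$ is a polynomial in $t$ with zero constant term, matching the absence of constant terms in the $a_l$.

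At $0$, I would use the Fricke involution $W_7:\tau\mapsto -1/(7\tau)$. The eta transformation law gives $t(-1/(7\tau))=7^{-2}t(\tau)^{-1}$. Conjugating $\beta_j$ by $W_7$ shows that near the cusp $0$, $x_0$ behaves like $7^{-2}/t(7\tau)$, and the other six $x_j$ behave like $7^{-2}/t\bigl((\tau+j')/7\bigr)$. Thus one root has a pole of order $7$ in the local parameter, the other six have poles of order $1/7$, and $t$ itself has a simple pole there. This bounds the degree of $e_{7-l}$ as a polynomial in $t$ by $7-l+\dots$, which is consistent with $\deg a_l=l+1$; e.g.\ $a_6$ has degree $7$, driven by the order-$7$ pole of $x_0$.

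\textbf{Step 3: identifying the coefficients.} Once the degree bounds are in place, each $e_i$ is a polynomial in $t$ of explicitly bounded degree with zero constant term. The coefficients are then determined by comparing the first few (at most eight) coefficients of the $q$-expansions of $U_7(t^m)$ for $m\le 7$, which is a finite computation giving the stated $a_l$. The leading coefficients $7^{2(l+1)-2}$ can be cross-checked from the local behaviour at the cusp $0$ via the factor $7^{-2}$ in the Fricke relation.

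The main obstacle I expect is Step 2: getting the pole orders at the cusp $0$ exactly right, including the powers of $7$ produced by the Fricke transformation, so that the degree bound matches the stated degrees. After that, the identification is mechanical.
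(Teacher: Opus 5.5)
The paper does not prove this statement itself; it quotes it from \cite[Theorem~2.6]{CCG24}. Your overall strategy is the standard route and is sound: symmetric functions of the seven conjugates, $\Gamma_0(7)$-invariance, expansion as polynomials in the Hauptmodul $t$, a degree bound from the cusp $0$, and a finite $q$-expansion check. Your treatment of the cusp $\infty$ is also fine. But Step~2 is wrong as written, and it is exactly the step that makes Step~3 a finite check.

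The six conjugates $x_j$ with $j\neq 0$ do not have poles at the cusp $0$; they vanish there. For $7\nmid j$, pick $b,d$ with $jd-7b=1$. Then
\[
\beta_jW_7=\begin{pmatrix}7j&-1\\49&0\end{pmatrix}
=\begin{pmatrix}j&b\\7&d\end{pmatrix}\begin{pmatrix}7&-d\\0&7\end{pmatrix},
\]
where the first factor lies in $\Gamma_0(7)$. Hence $x_j(-1/(7\tau))=t(\tau-d/7)=\zeta_7^{-d}q+O(q^2)$. Equivalently, as $\tau\to0$ we have $(\tau+j)/7\to j/7$, a cusp equivalent to $\infty$. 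What you wrote, $7^{-2}/t((\tau+j')/7)$, is $t(W_7\beta_{j'}\tau)$, not $t(\beta_jW_7\tau)$; $W_7$ cannot be moved past $\beta_j$ that way.

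Your local picture is also internally inconsistent. The function $e_7\circ W_7$ has an expansion in integral powers of $q$, yet your data would give $e_7=\prod_j x_j$ a pole of order $7+6/7$. In fact a direct product computation gives $e_7=t$, which has a simple pole at $0$. Likewise your bound ``$7-l+\dots$'' cannot produce $\deg a_0=1$, so the claimed consistency with $\deg a_l=l+1$ is not actually derived.

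With the correct local data the argument closes. Use three facts:
\begin{itemize}
\item $e_i=x_0\,e_{i-1}(x_1,\dots,x_6)+e_i(x_1,\dots,x_6)$;
\item $x_0(-1/(7\tau))=7^{-2}q^{-7}(1+O(q))$;
\item the leading coefficients of $x_1,\dots,x_6$ at $0$ are $\zeta_7,\dots,\zeta_7^6$ in some order, so by $\prod_{k=1}^6(X-\zeta_7^k)=1+X+\dots+X^6$ their $(6-l)$-th elementary symmetric function is $(-1)^{6-l}$.
\end{itemize}
Together these give
\[
e_{7-l}\bigl(-1/(7\tau)\bigr)=(-1)^{6-l}\,7^{-2}q^{-(l+1)}\bigl(1+O(q)\bigr).
\]
Since $t(-1/(7\tau))=7^{-2}q^{-1}(1+O(q))$, the function $(-1)^{6-l}e_{7-l}$ is a polynomial in $t$ of degree exactly $l+1$, with zero constant term and leading coefficient $7^{2l}$, matching $a_l$. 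Its remaining coefficients are fixed by the coefficients of $q^1,\dots,q^{l+1}$, which you compute from the power sums $7U_7(t^m)$ via Newton's identities.

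A small point in Step~1: what $7\mid c$ buys you is that $a$ is a unit mod $7$. That is why $j'\equiv a^{-1}(b+jd)\pmod 7$ exists and $j\mapsto j'$ is a bijection. The lower-left entry of $\beta_j\gamma\beta_{j'}^{-1}$ is $7c$ regardless.
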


To study the coefficients in Theorem~\ref{thm:main}, we use the $U_p$ operator. For a prime $p$ and a formal Laurent series
\[
f=\sum_{m=m_0}^{\infty}a(m)q^m,
\qquad m_0\in\mathbb{Z},
\]
define
\begin{equation}\label{eq:Up_definition}
U_p(f):=\sum_{pm\geq m_0}a(pm)q^m.
\end{equation}
If the series converges, this definition gives
\[
U_p(f)(\tau)
=
\frac{1}{p}\sum_{j=0}^{p-1}
f\left(\frac{\tau+j}{p}\right).
\]
The operator is linear and satisfies
\begin{equation}\label{eq:Up_product}
U_p\bigl(f(\tau)h(p\tau)\bigr)
=
h(\tau)U_p(f)(\tau),
\end{equation}
whenever $f$ and $h$ have formal Laurent expansions with only finitely many negative powers of $q$. Both properties follow directly from \eqref{eq:Up_definition}. 

The following result describes how $U_p$ acts on the level of a modular function.

\begin{theorem}[{\cite[Lemma~7, p.~138]{AtkinLehner1970}}]
\label{thm:Up_level}
Let $p$ be a prime and let $N$ be a positive integer with $p\mid N$. If $f$ is a modular function on $\Gamma_0(pN)$, then $U_p(f)$ is a modular function on $\Gamma_0(N)$.
\end{theorem}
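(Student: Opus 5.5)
The plan is the classical coset argument. Write $U_p(f)$ as a finite sum of slashes of $f$ by upper-triangular matrices, and show that $\Gamma_0(N)$ permutes these matrices modulo $\Gamma_0(pN)$.

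Put $\beta_j=\begin{pmatrix}1&j\\0&p\end{pmatrix}$ for $0\le j\le p-1$. Then $\beta_j\tau=(\tau+j)/p$, and the convergent form of the definition gives
\[
U_p(f)(\tau)=\frac1p\sum_{j=0}^{p-1}f(\beta_j\tau).
\]
Fix $\gamma=\begin{pmatrix}a&b\\c&d\end{pmatrix}\in\Gamma_0(N)$. Since $p\mid N$ and $N\mid c$, we have $p\mid c$. From $ad-bc=1$ it follows that $ad\equiv1\pmod p$, so $a$ and $d$ are units modulo $p$.

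The key step is to show that for each $j$ there are a unique $j'\in\{0,\dots,p-1\}$ and a matrix $\gamma_j\in\Gamma_0(pN)$ with
\[
\beta_j\gamma=\gamma_j\beta_{j'}.
\]
Write
\[
\beta_j\gamma=\begin{pmatrix}a+jc& b+jd\\ pc& pd\end{pmatrix}
\quad\text{and}\quad
\begin{pmatrix}A&B\\C&D\end{pmatrix}\beta_{j'}=\begin{pmatrix}A& Aj'+pB\\ C& Cj'+pD\end{pmatrix}.
\]
Comparing entries forces $A=a+jc$, $C=pc$ and $D=d-cj'$. It also requires $(a+jc)j'\equiv b+jd\pmod p$, which reduces to $aj'\equiv b+jd\pmod p$ because $p\mid c$.

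Since $a$ is invertible modulo $p$, this congruence determines $j'$ uniquely. Since $d$ is invertible modulo $p$, the map $j\mapsto j'$ is a bijection of $\mathbb{Z}/p\mathbb{Z}$. Then $B$ is an integer, and comparing determinants ($p$ on both sides) gives $\det\gamma_j=1$. Finally $C=pc$ is divisible by $pN$, so $\gamma_j\in\Gamma_0(pN)$.

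With this in hand, invariance follows directly:
\[
U_p(f)(\gamma\tau)=\frac1p\sum_j f(\gamma_j\beta_{j'}\tau)=\frac1p\sum_j f(\beta_{j'}\tau)=U_p(f)(\tau),
\]
using the $\Gamma_0(pN)$-invariance of $f$ and the fact that $j\mapsto j'$ is a bijection.

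It remains to check holomorphy on $\mathbb{H}$ and meromorphy at the cusps. Each $f\circ\beta_j$ is meromorphic on $\mathbb{H}$. At a cusp $\sigma\infty$ with $\sigma\in SL_2(\mathbb{Z})$, write $\beta_j\sigma=\sigma_j\begin{pmatrix}a_j&b_j\\0&d_j\end{pmatrix}$ with $\sigma_j\in SL_2(\mathbb{Z})$ (Hermite normal form). Then $f(\beta_j\sigma\tau)$ is $f\circ\sigma_j$ evaluated at $(a_j\tau+b_j)/d_j$. Because $f$ has a meromorphic expansion at every cusp, $f\circ\sigma_j$ is a Laurent series in a fractional power of $q$ with finitely many negative terms, and the substitution $\tau\mapsto(a_j\tau+b_j)/d_j$ preserves this property. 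So the finite sum $U_p(f)\circ\sigma$ has a Laurent expansion in some $q^{1/M}$ with finitely many negative terms. Combined with the $\Gamma_0(N)$-invariance just proved, this gives a genuine meromorphic expansion at each cusp of $\Gamma_0(N)$.

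I expect the only delicate point to be the coset computation: specifically, that $p\mid c$ (which is exactly where $p\mid N$ is used) makes $j\mapsto j'$ well defined and bijective. The cusp analysis is routine bookkeeping.
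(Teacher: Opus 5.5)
Your proposal is correct. The factorization $\beta_j\gamma=\gamma_j\beta_{j'}$ with $\gamma_j\in\Gamma_0(pN)$, where $p\mid c$ makes $j\mapsto j'$ a well-defined bijection of $\mathbb{Z}/p\mathbb{Z}$, followed by the routine check at the cusps, is the standard coset argument behind Atkin--Lehner's lemma, which the paper only cites without giving a proof of its own.
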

\noindent We now combine the modular equation with the product identity \eqref{eq:Up_product} to obtain a recurrence for $U_7(ut^j)$, called \textit{the fundamental lemma}.
\begin{lemma}[Fundamental lemma]
\label{lem:U7_recurrence}
Let $u=u(\tau)$ have a formal Laurent expansion with only finitely many negative powers of $q$. For every integer $j$,
\[
U_7(ut^j)
=
\sum_{l=0}^{6}a_l(t)U_7(ut^{j+l-7}),
\]
where $t=t(\tau)$ and the polynomials $a_l(t)$ are those given in Theorem~\ref{thm:modular_equation}.
\end{lemma}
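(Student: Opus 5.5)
The plan is to derive the lemma directly from the modular equation of Theorem~\ref{thm:modular_equation}, using only linearity of $U_7$ and the product rule \eqref{eq:Up_product}. We work with $t(\tau)$ and $t(7\tau)$ as formal Laurent series in $q$. Since $t=q+\cdots$ has leading term $q$, it is invertible as a formal Laurent series, so $t^m$ makes sense for every integer $m$, including negative $m$. Consequently $ut^{m}$ has only finitely many negative powers of $q$, and $U_7(ut^m)$ is well defined.

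First we multiply the modular equation
\[
t(\tau)^7=\sum_{l=0}^{6}a_l\bigl(t(7\tau)\bigr)t(\tau)^l
\]
by $u(\tau)t(\tau)^{j-7}$. This gives the identity of formal Laurent series
\[
u(\tau)t(\tau)^j=\sum_{l=0}^{6}a_l\bigl(t(7\tau)\bigr)\,u(\tau)t(\tau)^{j+l-7}.
\]
The multiplication is legitimate because every factor has only finitely many negative powers of $q$. The modular equation holds as an identity of holomorphic functions on $\mathbb{H}$, hence of $q$-expansions, and so it holds formally.

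Next we apply $U_7$ to both sides. By linearity, the right-hand side becomes the sum over $l$ of $U_7\bigl(a_l(t(7\tau))\,ut^{j+l-7}\bigr)$. For each $l$ we invoke \eqref{eq:Up_product} with $f=ut^{j+l-7}$ and $h=a_l(t)$. Here $h$ is a polynomial in $t$, so it is a power series in $q$, and $h(7\tau)=a_l(t(7\tau))$. The product rule therefore pulls out the factor $a_l(t(\tau))$, and we obtain
\[
U_7(ut^j)=\sum_{l=0}^{6}a_l(t)\,U_7(ut^{j+l-7}).
\]
This is the claimed recurrence.

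The only point needing care is that the hypotheses of \eqref{eq:Up_product} hold and that the identity is valid for negative exponents. Both follow from the invertibility of $t$ as a Laurent series and from the finiteness of the negative-power parts. There is no real obstacle beyond this bookkeeping.
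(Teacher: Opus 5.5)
Your proof is correct and follows the same route the paper indicates: the paper defers to \cite[Lemma~3.5]{CCG24} but describes exactly this argument, namely multiplying the modular equation of Theorem~\ref{thm:modular_equation} by $ut^{j-7}$, applying $U_7$, and pulling each factor $a_l\bigl(t(7\tau)\bigr)$ outside via \eqref{eq:Up_product}. Your remarks that $t=q+\cdots$ is invertible as a Laurent series and that every factor has only finitely many negative powers of $q$ are the bookkeeping needed to justify the step for all integers $j$, including negative ones.
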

\noindent For a proof of this, see \cite[Lemma~3.5]{CCG24}.

\smallskip

To state the next result, we define the order of a nonzero Laurent polynomial $g(t)=\sum_n c_n t^n$ by
\[
\operatorname{ord}_t(g):=\min\{n:c_n\neq 0\},
\]
with the convention that $\operatorname{ord}_t(0)=+\infty$. The next lemma shows that Laurent polynomial representations with prescribed order bounds need only be established for seven consecutive exponents.
\begin{lemma}[{\cite[Lemma~3.6]{CCG24}}]
\label{lem:order_extension}
Let $u,v_1,v_2,v_3:\mathbb{H}\to\mathbb{C}$ and let $j_0\in\mathbb{Z}$. Fix integers $s_1,s_2,s_3$. Suppose that, for each integer $j$ with $j_0\leq j\leq j_0+6$, there are Laurent polynomials
\[
P_j^{(i)}(t)\in\mathbb{Z}[t,t^{-1}],
\qquad i=1,2,3,
\]
satisfying
\[
U_7(ut^j)
=
v_1P_j^{(1)}(t)+v_2P_j^{(2)}(t)+v_3P_j^{(3)}(t)
\]
and
\[
\operatorname{ord}_t\bigl(P_j^{(i)}(t)\bigr)
\geq
\left\lceil\frac{j+s_i}{7}\right\rceil,
\qquad i=1,2,3.
\]
Then such Laurent polynomials exist for every integer $j$, with the same identity and order bounds.
\end{lemma}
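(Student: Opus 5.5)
The plan is to use the fundamental lemma (Lemma~\ref{lem:U7_recurrence}) as a linear recurrence in $j$ and argue by induction in both directions away from the window $j_0\le j\le j_0+6$. Since the polynomials $a_l(t)$ do not depend on $j$, the recurrence can be run upward, expressing $U_7(ut^j)$ through the seven preceding values. It can also be run downward. In Theorem~\ref{thm:modular_equation} the coefficient $a_0(t)=t$ is a unit in $\mathbb{Z}[t,t^{-1}]$, so from
\[
U_7(ut^{j})=\sum_{l=0}^{6}a_l(t)\,U_7(ut^{j+l-7})
\]
we can solve for $U_7(ut^{j-7})=t^{-1}\bigl(U_7(ut^j)-\sum_{l=1}^{6}a_l(t)U_7(ut^{j+l-7})\bigr)$. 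In both directions, substituting the representations for the seven known exponents gives
\[
P^{(i)}_{\text{new}}=\sum_l a_l(t)\,P^{(i)}_{\text{old}}\quad\text{or}\quad t^{-1}\bigl(P^{(i)}_{j}-\textstyle\sum_{l\ge1}a_lP^{(i)}_{j+l-7}\bigr).
\]
These are again in $\mathbb{Z}[t,t^{-1}]$ because every $a_l$ has integer coefficients. The identity with $v_1,v_2,v_3$ holds by linearity, so the only thing to check is the order bound.

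For the upward step, take $j\ge j_0+7$ and assume the bound for $j-7,\dots,j-1$. Each $a_l$ has $\operatorname{ord}_t(a_l)\ge1$ (every listed polynomial is divisible by $t$). Hence
\[
\operatorname{ord}_t\bigl(a_lP^{(i)}_{j+l-7}\bigr)\ge 1+\left\lceil\frac{j+l-7+s_i}{7}\right\rceil\ge 1+\left\lceil\frac{j-7+s_i}{7}\right\rceil=\left\lceil\frac{j+s_i}{7}\right\rceil,
\]
using $l\ge0$ and the monotonicity of the ceiling. The ord of a sum is at least the minimum, so the bound holds at $j$.

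For the downward step, take $j-7<j_0$ and assume the bound for $j-6,\dots,j$. We need $\operatorname{ord}_t\ge\lceil (j-7+s_i)/7\rceil=\lceil (j+s_i)/7\rceil-1$. The term $t^{-1}P^{(i)}_j$ has order at least $\lceil (j+s_i)/7\rceil-1$, which is exactly what is required. For $1\le l\le6$ we have $\operatorname{ord}_t(t^{-1}a_lP^{(i)}_{j+l-7})\ge \operatorname{ord}_t(a_l)-1+\lceil (j+l-7+s_i)/7\rceil$. With only $\operatorname{ord}_t(a_l)\ge1$, this gives $\lceil (j+l-7+s_i)/7\rceil\ge\lceil (j-6+s_i)/7\rceil$, and that is $\ge\lceil (j+s_i)/7\rceil-1$ because $(j-6+s_i)/7 > (j+s_i)/7-1$. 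So the downward step also closes. Induction in both directions then covers all $j\in\mathbb{Z}$.

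The main obstacle is the downward step. The division by $t$ costs one order, and this has to be absorbed exactly by the ceiling dropping by one when $j$ drops by $7$. The point to verify carefully is that for $1\le l\le 6$ the ceilings $\lceil (j+l-7+s_i)/7\rceil$ never fall below $\lceil (j+s_i)/7\rceil-1$. If that inequality were delicate, I would fall back on the sharper divisibility of $a_l$, namely that $a_1,\dots,a_6$ are all divisible by $7t$, though the integrality and order argument above should not need it. If only nonnegative $j$ are of interest, the upward step alone suffices.
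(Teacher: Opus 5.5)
Your proof is correct and is essentially the standard argument behind the cited \cite[Lemma~3.6]{CCG24}, which the paper quotes without proof: you induct upward via Lemma~\ref{lem:U7_recurrence} using $\operatorname{ord}_t(a_l)\ge 1$, and downward by inverting $a_0(t)=t$, with the drop of the ceiling by one absorbing the order lost to $t^{-1}$ exactly as you checked. One small caveat: your closing remark that the upward step alone suffices for nonnegative $j$ tacitly assumes $j_0\le 0$, which does hold for every window used in the appendices.
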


We also need to control the powers of $7$ dividing the coefficients. The following lemma extends these coefficient bounds from seven consecutive exponents to all integer exponents.

\begin{lemma}[{\cite[Lemma~3.7]{CCG24}}]
\label{lem:coefficient_extension}
Let $u,v_1,v_2,v_3:\mathbb{H}\to\mathbb{C}$ and let $j_0\in\mathbb{Z}$. Fix integers $r_1,r_2,r_3$. Suppose that, for each integer $j$ with $j_0\leq j\leq j_0+6$, there are Laurent polynomials
\[
P_j^{(i)}(t)\in\mathbb{Z}[t,t^{-1}],
\qquad i=1,2,3,
\]
such that
\[
U_7(ut^j)
=
v_1P_j^{(1)}(t)+v_2P_j^{(2)}(t)+v_3P_j^{(3)}(t),
\]
where
\[
P_j^{(i)}(t)
=
\sum_n c_i(j,n) 
7^{\left\lfloor(7n-j+r_i)/4\right\rfloor}t^n,
\qquad c_i(j,n)\in\mathbb{Z}.
\]
Then these families of Laurent polynomials can be extended to every integer $j$, preserving both the identity and the stated form of their coefficients.
\end{lemma}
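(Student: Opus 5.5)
The plan is to propagate the coefficient form from the window $j_0\le j\le j_0+6$ to all $j$ by induction in both directions, using the Fundamental lemma (Lemma~\ref{lem:U7_recurrence}) as a seven-term recurrence in $j$. The whole argument reduces to one $7$-adic condition on the coefficients of the polynomials $a_l(t)$ from Theorem~\ref{thm:modular_equation}. Write $a_l(t)=\sum_k c_{l,k}t^k$ and let $\nu_7$ denote the $7$-adic valuation. The key claim is that, for all $0\le l\le 6$ and all $k$ with $c_{l,k}\ne0$,
\[
\nu_7(c_{l,k})\ \ge\ \left\lceil \frac{7k+l-7}{4}\right\rceil .
\]
I would check this term by term from the explicit list. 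For example:
\begin{itemize}
\item $a_0=t$ needs $\nu_7\ge 0$;
\item $4\cdot7t$ in $a_1$ needs $\lceil 1/4\rceil=1$;
\item $845\cdot 7t$ in $a_4$ needs $1$;
\item $176\cdot7^2t$ in $a_5$ needs $\lceil 5/4\rceil=2$;
\item $82\cdot7^2t$ in $a_6$ needs $\lceil 6/4\rceil=2$;
\item a leading term $7^{2l}t^{l+1}$ needs $\lceil (8l)/4\rceil=2l$.
\end{itemize}
The remaining terms are similar. I expect this finite verification to be the main (though routine) obstacle: it is exactly where the specific shape $\lfloor (7n-j+r)/4\rfloor$ is forced.

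\emph{Upward induction.} Suppose the form holds for $j-7,\dots,j-1$. By Lemma~\ref{lem:U7_recurrence},
\[
U_7(ut^j)=\sum_{l=0}^6 a_l(t)\,U_7(ut^{j+l-7}).
\]
Substituting the representations for the indices $j+l-7$ and collecting by $v_i$ gives $P^{(i)}_j=\sum_l a_l(t)P^{(i)}_{j+l-7}$, which lies in $\mathbb{Z}[t,t^{-1}]$. A product term $c_{l,k}t^k\cdot 7^{\lfloor(7m-(j+l-7)+r_i)/4\rfloor}t^m$ contributes to $t^n$ with $n=k+m$. Its power of $7$ is at least
\[
\nu_7(c_{l,k})+\left\lfloor\frac{7n-j+r_i-(7k+l-7)}{4}\right\rfloor .
\]
Using $\lfloor x-b\rfloor+\lceil b\rceil\ge\lfloor x\rfloor$ together with the key claim, this is at least $\lfloor(7n-j+r_i)/4\rfloor$. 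So the new coefficients have the required form.

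\emph{Downward induction.} Since $a_0(t)=t$, the recurrence can be solved for the lowest index. Put $j'=j-7$; then
\[
U_7(ut^{j'})=t^{-1}U_7(ut^{j'+7})-\sum_{l=1}^6 t^{-1}a_l(t)\,U_7(ut^{j'+l}).
\]
This expresses the index $j'$ through $j'+1,\dots,j'+7$, with integer coefficients. The first term shifts $n=m-1$, which gives exactly $\lfloor(7(n+1)-(j'+7)+r_i)/4\rfloor=\lfloor(7n-j'+r_i)/4\rfloor$. For the other terms, $c_{l,k}t^{k-1}$ times the $t^m$ term of $P^{(i)}_{j'+l}$ lands at $n=m+k-1$. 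Its $7$-power is at least $\nu_7(c_{l,k})+\lfloor(7n-j'+r_i-(7k+l-7))/4\rfloor$, so the same inequality closes the induction.

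Combining the two directions starting from the given window yields the form for every integer $j$, with the identity preserved at each step.
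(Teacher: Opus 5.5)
Your proposal is correct and is essentially the argument behind the cited \cite[Lemma~3.7]{CCG24}; the paper only quotes that lemma and gives no proof of its own. That argument is a two-sided induction on $j$ through the fundamental lemma, using $a_0(t)=t$ to solve for the lowest index when going downward, with everything reduced to the bound $\nu_7(c_{l,k})\ge\lceil(7k+l-7)/4\rceil$ on the coefficients of $a_l(t)$; this bound holds for every term of $a_0,\dots,a_6$ in Theorem~\ref{thm:modular_equation}, so your induction closes in both directions.
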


\section{Auxiliary functions}\label{sec:auxiliary_functions}

The recursions used below are built from $U_7$ and two auxiliary functions associated with each generating function. To define them, we write
\[
F_k(q):=\sum_{n=0}^{\infty}a_k(n)q^n
=\frac{f_2^{k-1}}{f_1^k},
\qquad k\in\{4,5,7\},
\]
as in \eqref{eq:gf_ak}, and for these values of $k$, define
\[
A_k:=\frac{F_k(q)}{q^{2(k-2)}F_k(q^{49})}
=\frac{f_2^{k-1}f_{49}^{k}}
{q^{2(k-2)}f_1^{k}f_{98}^{k-1}},
\]
and
\[
B_k:=
\begin{cases}
1, & k\in\{4,5\},\\[4pt]
\dfrac{1}{q}, & k=7.
\end{cases}
\]
For a map $h:\mathbb{H}\to\mathbb{C}$ admitting a Laurent expansion in $q$ with only finitely many negative powers, define
\begin{equation}\label{eq:auxiliary_operators}
U_{A_k}(h):=U_7(A_kh),
\qquad
U_{B_k}(h):=U_7(B_kh).
\end{equation}

We also define two auxiliary functions common to all these values of $k$:
\begin{equation}\label{eq:def_p0_p1}
p_0:=q\frac{f_{14}^4f_1^4}{f_7^4f_2^4},
\qquad
p_1:=\frac{1}{7}
\left(\frac{f_{14}f_1^7}{f_7f_2^7}-1\right).
\end{equation}
Starting with $L_{k,0}:=1$, we define a sequence of functions by alternating the two operators. For $k=4,5$ and every integer $\alpha\geq 0$, set
\[
L_{k,2\alpha+1}:=U_{A_k}(L_{k,2\alpha}),
\qquad
L_{k,2\alpha+2}:=U_{B_k}(L_{k,2\alpha+1}).
\]
For $k=7$, we include a factor of $q$ at each odd step:
\[
L_{7,2\alpha+1}:=q\,U_{A_7}(L_{7,2\alpha}),
\qquad
L_{7,2\alpha+2}:=U_{B_7}(L_{7,2\alpha+1}),
\qquad \alpha\geq 0.
\]

Applying \eqref{eq:Up_definition} and \eqref{eq:Up_product} successively to the generating function \eqref{eq:gf_ak}, we obtain, for every integer $\alpha\geq 1$,
\[
\begin{aligned}
L_{k,2\alpha-1}
&=
\frac{1}{F_k(q^7)}
\sum_{n=0}^{\infty}
a_k\bigl(7^{2\alpha-1}n+\lambda_{k,2\alpha-1}\bigr)q^n,\\
L_{k,2\alpha}
&=
\frac{1}{F_k(q)}
\sum_{n=0}^{\infty}
a_k\bigl(7^{2\alpha}n+\lambda_{k,2\alpha}\bigr)q^n,
\end{aligned}
\]
where
\[
\begin{gathered}
\lambda_{k,2\alpha-1}=\lambda_{k,2\alpha}
=\frac{(k-2)(7^{2\alpha}-1)}{24}
\qquad (k=4,5),\\[8pt]
\lambda_{7,2\alpha-1}=\frac{11\cdot7^{2\alpha-1}-5}{24},
\qquad
\lambda_{7,2\alpha}=\frac{5(7^{2\alpha}-1)}{24}.
\end{gathered}
\]
\par\vspace{8pt}

To write the finite expansions arising from these operators in a uniform form, we follow the terminology of Paule and Radu~\cite[Definition~2.8]{PauleRadu2012}. A function $r:\mathbb{Z}\to\mathbb{Z}$ is called \emph{discrete} if $r(n)=0$ for all but finitely many integers $n$. Thus, sums with discrete coefficient functions contain only finitely many nonzero terms. With this convention, we define

\begin{equation}\label{eq:def_XA}\begin{aligned} X_A:=\Bigl\{&7r_1(0)+\sum_{j\ge1}r_1(j)7^{\lfloor\frac{7j+2}{4}\rfloor}t^j+p_0\sum_{j\ge1}r_2(j)7^{\lfloor\frac{7j+2}{4}\rfloor}t^j+p_1\sum_{j\ge0}r_3(j)7^{\lfloor\frac{7j+5}{4}\rfloor}t^j\Bigr\},\end{aligned}\end{equation}

\begin{equation}\label{eq:def_XB}\begin{aligned} X_B:=\Bigl\{&7r_1(-1)t^{-1}+7r_1(0)+\sum_{j\ge1}r_1(j)7^{\lfloor\frac{7j-3}{4}\rfloor}t^j+p_0\Bigl(7r_2(0)+\sum_{j\ge1}r_2(j)7^{\lfloor\frac{7j-3}{4}\rfloor}t^j\Bigr)\\[6pt]&\quad+p_1\Bigl(7r_3(-1)t^{-1}+7r_3(0)+\sum_{j\ge1}r_3(j)7^{\lfloor\frac{7j}{4}\rfloor}t^j\Bigr)\Bigr\},\end{aligned}\end{equation}
where $r_1,r_2,r_3:\mathbb{Z}\to\mathbb{Z}$ range over discrete functions.

The divisibility of the successive functions $L_{k,\alpha}$ can now be expressed in terms of $X_A$ and $X_B$. We state the result below and prove it in the next section using the modular identities from the appendices.

\begin{proposition}\label{prop:L_alpha}
For every integer $\alpha\ge0$, we have
\[
\begin{aligned} L_{k,2\alpha+1}&\in7^\alpha X_B &&(k=4,5),\\[4pt] L_{7,2\alpha+1}&\in7^\alpha qX_B,\\[4pt] L_{k,2\alpha+2}&\in7^\alpha X_A &&(k\in\{4,5,7\}). \end{aligned}
\]
Here $cX:=\{cg:g\in X\}$ denotes multiplication of each element of $X$ by $c$.
\end{proposition}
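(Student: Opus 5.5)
We argue by induction on $\alpha$, alternating between the two operators, in the style of Paule--Radu and Chen--Chern--Garvan. The engine is a pair of ``closure'' statements:
\[
U_{A_k}(X_A)\subseteq X_B \ (\text{resp. } q^{-1}U_{A_7}(X_A)\subseteq X_B),\qquad U_{B_k}(X_B)\subseteq 7X_A .
\]
For $k=7$ the factor $q$ in $L_{7,2\alpha+1}$ cancels against $B_7=1/q$, so the second closure is used in the form $U_7(q^{-1}\cdot qg)=U_7(g)$. Granting these, the base case is $L_{k,0}=1$. One checks separately that $L_{k,1}=U_{A_k}(1)$ (resp. $qU_{A_7}(1)$) lies in $X_B$ (resp. $qX_B$). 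This is an explicit identity expressing $U_7(A_k)$ as a Laurent polynomial in $t$ plus $p_0$ and $p_1$ times Laurent polynomials, as recorded in the appendices. Then $L_{k,2}=U_{B_k}(L_{k,1})\in 7^0X_A$ follows; in fact we get the sharper $7X_A$, but the statement only claims $7^0$ at this step.

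We need to check how the powers of $7$ propagate. The indexing requires $L_{2\alpha+1}\in 7^\alpha X_B$ and $L_{2\alpha+2}\in 7^\alpha X_A$. Using $U_{A}(X_A)\subseteq X_B$ and $U_{B}(X_B)\subseteq 7X_A$ would give $L_{2\alpha+2}\in 7^{\alpha+1}X_A$, which is more than claimed. I therefore expect the intended closures to be $U_{A_k}(X_A)\subseteq 7X_B$ and $U_{B_k}(X_B)\subseteq X_A$. The explicit $7$'s in the low-order terms of $X_A$ (the constant $7r_1(0)$, and $7^{\lfloor 5/4\rfloor}$ on $p_1$) are what make the $A$-step gain a factor of $7$. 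I would therefore organize the induction as follows:
\[
L_{2\alpha+1}=U_A(L_{2\alpha})\in U_A(7^{\alpha-1}X_A)\subseteq 7^{\alpha}X_B,\qquad L_{2\alpha+2}=U_B(L_{2\alpha+1})\in 7^\alpha X_A ,
\]
with the base case $L_1\in X_B$ handled directly.

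To prove each closure, linearity of $U_7$ reduces it to the images of the generators. These are $U_7(A_kt^j)$, $U_7(A_kp_0t^j)$, and $U_7(A_kp_1t^j)$ for the $A$-step, and the analogous images with $B_k$ for the $B$-step. The $p_0$ and $p_1$ terms are handled by writing $p_0,p_1$ in terms of $t$ and the functions $v_1=1$, $v_2=p_0$, $v_3=p_1$. Here Theorem~\ref{thm:Up_level} guarantees that $U_7$ of a $\Gamma_0(98)$ or $\Gamma_0(14)$ function lands at level $14$, where $\{1,p_0,p_1\}$ with $\mathbb{Z}[t,t^{-1}]$ coefficients form a module basis. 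For each $u\in\{A_k,A_kp_0,A_kp_1,B_k,B_kp_0,B_kp_1\}$, the appendix identities give the expansions of $U_7(ut^j)$ for seven consecutive $j$. Lemma~\ref{lem:order_extension} then extends the order bounds $\operatorname{ord}_t P^{(i)}_j\ge\lceil (j+s_i)/7\rceil$ to all $j$, and Lemma~\ref{lem:coefficient_extension} extends the $7$-adic shape $7^{\lfloor(7n-j+r_i)/4\rfloor}$ to all $j$.

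The final step is bookkeeping. Given a generator $r(j)7^{\lfloor(7j+c)/4\rfloor}t^j$, its image has coefficients $7^{\lfloor(7j+c)/4\rfloor+\lfloor(7n-j+r_i)/4\rfloor}$ on $t^n$. We must show this exponent is at least the exponent that $X_B$ (resp. $X_A$) prescribes for $t^n$, plus the required extra power of $7$. We also must show that only the allowed low powers $t^{-1},t^0$ appear, with a factor $7$, which comes from the order bounds. The inequality
\[
\lfloor (7j+c)/4\rfloor+\lfloor(7n-j+r)/4\rfloor\ \ge\ \lfloor(7n+c')/4\rfloor+\epsilon
\]
follows from $\lfloor x\rfloor+\lfloor y\rfloor\ge\lfloor x+y\rfloor-1$ and $6j\ge 6$ for $j\ge1$; the cases $j=0,-1$ are treated by hand.

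The main obstacle is finding the correct constants $s_i,r_i$ and verifying the seven initial identities for each of the $18$ functions $u$ (six per $k$) so that the inequality closes with exactly one gain of $7$ per double step. This is a finite but heavy computation with modular functions on $\Gamma_0(14)$, checked via valence bounds or Sturm-type arguments, as the appendices presumably do.
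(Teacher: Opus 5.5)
Your final architecture matches the paper. After your self-correction, the three ingredients are exactly its assertions (i) $U_{A_k}(1)\in X_B$, (ii) $U_7(X_B)\subseteq X_A$ and (iii) $U_{A_k}(X_A)\subseteq 7X_B$. The factor $q$ for $k=7$ cancels against $B_7=1/q$, and the expansions come from seven initial identities extended via Lemma~\ref{lem:U7_recurrence} and Lemmas~\ref{lem:order_extension}--\ref{lem:coefficient_extension}. The gap is in the bookkeeping, which does not close as you state it.

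The estimate $\lfloor x\rfloor+\lfloor y\rfloor\ge\lfloor x+y\rfloor-1$ together with $6j\ge6$ fails in (ii) at $j=1$. There $U_7(t)=\sum_{n\ge1}d(n)7^{\lfloor(7n-2)/4\rfloor}t^n$ sends the input $7r_1(1)t\in X_B$ to coefficients divisible by $7^{1+\lfloor(7n-2)/4\rfloor}=7^{\lfloor(7n+2)/4\rfloor}$. This is exactly the power $X_A$ demands, whereas your estimate only yields $7^{\lfloor(7n-2)/4\rfloor}$. The inputs $7r_2(1)p_0t$ and $7r_3(1)p_1t$ are equally tight. Different constants cannot rescue this: $U_7(t^{-1})=-4-7t$ forces the shift $-1$ in the $7$-adic shape of $U_7(t^j)$. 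You must either treat $j=1$ separately or, as the paper does, bound the input exponent below by an integer before combining floors. That is, use $\lfloor(7j-3)/4\rfloor\ge j$ and then $j+\lfloor(7n-j-1)/4\rfloor=\lfloor(7n+3j-1)/4\rfloor\ge\lfloor(7n+2)/4\rfloor$, and similarly $\lfloor(7j+2)/4\rfloor\ge j+1$ in (iii).

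The second failing point is the nonpositive powers in (iii). Inputs with $j\ge1$ can produce outputs at $t^0$ for every $k$. For $k=7$ they can also produce outputs at $t^{-1}$, since $\lceil(j+4-2k)/7\rceil=-1$ for $j\le3$. At these powers the $7$-adic shape has a negative exponent. For example, at $n=0$, $j=1$ the scalar part gives $\lfloor 9/4\rfloor+\lfloor -4/4\rfloor=1$, short of the $2$ that $7X_B$ requires there. The order bounds only locate these terms. The required $7^2$ comes from two facts together:
\begin{enumerate}
\item the $P^{(i)}_j$ have integer coefficients, which the recurrence preserves since $a_l\in\mathbb{Z}[t]$ and $a_0=t$;
\item the prefactor satisfies $7^{\lfloor(7j+2)/4\rfloor}\ge7^2$ for $j\ge1$.
\end{enumerate}
This is how the paper argues. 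The $j=0$ terms also need the separate checks $U_{A_k}(1)\in X_B$ and $U_{A_k}(p_1)\in X_B$, which come from the first identities of Groups~III and~II.

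Finally, drop your remark that $L_{k,2}\in7X_A$; it is false. The constant term of $L_{4,2}$ is $a_4(4)=63\not\equiv0\pmod{49}$, which also shows that (ii) cannot be strengthened to $U_7(X_B)\subseteq7X_A$. Your claim that $\{1,p_0,p_1\}$ is a $\mathbb{Z}[t,t^{-1}]$-module basis at level $14$ is unproved and unnecessary, since only the explicit identities are used.
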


\section[Proof of the main theorem]{Proof of Theorem~\ref{thm:main}}\label{sec:proof_main}

The modular identities in Appendices~\ref{app:a4}--\ref{app:a7} give seven consecutive initial cases for each of the six families below. We determine the lower summation limits and the powers of $7$ separately for the Laurent polynomials multiplying $1$, $p_0$, and $p_1$. If $bt^n$ is a nonzero term in an initial identity with input exponent $j$, then a lower limit of the form $n\geq\lceil(j+s)/7\rceil$ requires $s\leq 7n-j$. We therefore choose $s$ no larger than the smallest value of $7n-j$ occurring in that component across the seven identities. For the divisibility bound, let $\nu_7(b)$ denote the exponent of $7$ dividing $b$. Writing $b$ as $7^{\lfloor(7n-j+r)/4\rfloor}$ times an integer is possible precisely when $r\leq 4\nu_7(b)+3-7n+j$. The shift $r$ is chosen by taking the smallest of these upper bounds, or a smaller value when a common bound is needed for several values of $k$. Thus both bounds follow from finitely many checks on the appendix identities. The recurrence in Lemma~\ref{lem:U7_recurrence}, together with Lemmas~\ref{lem:order_extension} and~\ref{lem:coefficient_extension}, then extends these expansions, with the same bounds, to every integer $j$.

Consequently, for $k\in\{4,5,7\}$ and $j\in\mathbb{Z}$, there exist discrete functions
\[
c_{k,j,u}^{(h)},\ d_{j,u}^{(h)}:\mathbb{Z}\to\mathbb{Z},
\qquad u,h\in\{0,1,2\},
\]
for which the following expansions hold. The index $u=0,1,2$ corresponds to the input $p_0t^j$, $p_1t^j$, and $t^j$, respectively, while $h=0,1,2$ identifies the component multiplying $1$, $p_0$, and $p_1$. The values of $c_{k,j,u}^{(h)}$ and $d_{j,u}^{(h)}$ are the integers obtained by dividing the coefficients in the expansions for $U_{A_k}$ and $U_7$, respectively, by the corresponding powers of $7$.

\begin{align*}
U_{A_k}(p_0t^j)=&\sum_{n\ge\left\lceil\frac{j+5-2k}{7}\right\rceil}c_{k,j,0}^{(0)}(n)7^{\left\lfloor\frac{7n-j-2}{4}\right\rfloor}t^n+p_0\sum_{n\ge\left\lceil\frac{j+12-2k}{7}\right\rceil}c_{k,j,0}^{(1)}(n)7^{\left\lfloor\frac{7n-j-3}{4}\right\rfloor}t^n\\&+p_1\sum_{n\ge\left\lceil\frac{j+5-2k}{7}\right\rceil}c_{k,j,0}^{(2)}(n)7^{\left\lfloor\frac{7n-j+1}{4}\right\rfloor}t^n,\\
U_{A_k}(p_1t^j)=&\sum_{n\ge\left\lceil\frac{j+5-2k}{7}\right\rceil}c_{k,j,1}^{(0)}(n)7^{\left\lfloor\frac{7n-j-3}{4}\right\rfloor}t^n+p_0\sum_{n\ge\left\lceil\frac{j+12-2k}{7}\right\rceil}c_{k,j,1}^{(1)}(n)7^{\left\lfloor\frac{7n-j-3}{4}\right\rfloor}t^n\\&+p_1\sum_{n\ge\left\lceil\frac{j+5-2k}{7}\right\rceil}c_{k,j,1}^{(2)}(n)7^{\left\lfloor\frac{7n-j}{4}\right\rfloor}t^n,\\
U_{A_k}(t^j)=&\sum_{n\ge\left\lceil\frac{j+4-2k}{7}\right\rceil}c_{k,j,2}^{(0)}(n)7^{\left\lfloor\frac{7n-j-3}{4}\right\rfloor}t^n+p_0\sum_{n\ge\left\lceil\frac{j+11-2k}{7}\right\rceil}c_{k,j,2}^{(1)}(n)7^{\left\lfloor\frac{7n-j-2}{4}\right\rfloor}t^n\\&+p_1\sum_{n\ge\left\lceil\frac{j+4-2k}{7}\right\rceil}c_{k,j,2}^{(2)}(n)7^{\left\lfloor\frac{7n-j+1}{4}\right\rfloor}t^n.
\end{align*}

\noindent Since $U_{B_4}(h)=U_{B_5}(h)=U_{B_7}(qh)=U_7(h)$, the remaining three families take the common form
\begin{align*}
U_7(p_0t^j)=&\sum_{n\ge\left\lceil\frac{j+1}{7}\right\rceil}d_{j,0}^{(0)}(n)7^{\left\lfloor\frac{7n-j}{4}\right\rfloor}t^n+p_0\sum_{n\ge\left\lceil\frac{j+3}{7}\right\rceil}d_{j,0}^{(1)}(n)7^{\left\lfloor\frac{7n-j-1}{4}\right\rfloor}t^n\\&+p_1\sum_{n\ge\left\lceil\frac{j}{7}\right\rceil}d_{j,0}^{(2)}(n)7^{\left\lfloor\frac{7n-j+3}{4}\right\rfloor}t^n,\\
U_7(p_1t^j)=&\sum_{n\ge\left\lceil\frac{j+1}{7}\right\rceil}d_{j,1}^{(0)}(n)7^{\left\lfloor\frac{7n-j-1}{4}\right\rfloor}t^n+p_0\sum_{n\ge\left\lceil\frac{j+4}{7}\right\rceil}d_{j,1}^{(1)}(n)7^{\left\lfloor\frac{7n-j-1}{4}\right\rfloor}t^n\\&+p_1\sum_{n\ge\left\lceil\frac{j}{7}\right\rceil}d_{j,1}^{(2)}(n)7^{\left\lfloor\frac{7n-j+2}{4}\right\rfloor}t^n,\\
U_7(t^j)=&\sum_{n\ge\left\lceil\frac{j}{7}\right\rceil}d_{j,2}^{(0)}(n)7^{\left\lfloor\frac{7n-j-1}{4}\right\rfloor}t^n.
\end{align*}

We are now ready to prove Proposition~\ref{prop:L_alpha}.

\begin{proof}[Proof of Proposition~\ref{prop:L_alpha}]
It suffices to prove the following three assertions:
\[
\begin{aligned}
\text{\textbf{(i)}}\quad &U_{A_k}(1)\in X_B &&(k\in\{4,5,7\}),\\[6pt]
\text{\textbf{(ii)}}\quad &U_7(X_B)\subseteq X_A,\\[6pt]
\text{\textbf{(iii)}}\quad &U_{A_k}(X_A)\subseteq 7X_B &&(k\in\{4,5,7\}).
\end{aligned}
\]
The proposition then follows by induction from the recursive definitions of $L_{k,\alpha}$, using $U_{B_7}(qg)=U_7(g)$ for $k=7$.

\assertionproof{i}
Putting $j=0$ in the general expansion of $U_{A_k}(t^j)$ gives the required divisibility for the coefficients of $t^n$ with $n\geq1$. For the coefficients of $p_0t^n$ and $p_1t^n$, respectively, we use
\[
\left\lfloor\frac{7n-2}{4}\right\rfloor
\geq
\left\lfloor\frac{7n-3}{4}\right\rfloor,
\qquad
\left\lfloor\frac{7n+1}{4}\right\rfloor
\geq
\left\lfloor\frac{7n}{4}\right\rfloor.
\]
The lower summation limits leave only $t^{-1}$, $1$, $p_0$, $p_1t^{-1}$, and $p_1$ to check. Their coefficients are divisible by $7$, as follows from the first identity of Group~III in each of Appendices~\ref{app:a4}--\ref{app:a7}. Thus $U_{A_k}(1)\in X_B$ for every $k\in\{4,5,7\}$.

\assertionproof{ii}
Let $g\in X_B$. With the notation in \eqref{eq:def_XB}, linearity gives
\[
\begin{aligned}
U_7(g)={}&7r_1(-1)U_7(t^{-1})+7r_1(0)+7r_2(0)U_7(p_0)+7r_3(-1)U_7(p_1t^{-1})+7r_3(0)U_7(p_1)\\
&+\sum_{j\geq1}7^{\left\lfloor\frac{7j-3}{4}\right\rfloor}
\bigl(r_1(j)U_7(t^j)+r_2(j)U_7(p_0t^j)\bigr)+\sum_{j\geq1}r_3(j)7^{\left\lfloor\frac{7j}{4}\right\rfloor}
U_7(p_1t^j).
\end{aligned}
\]

We first consider the terms outside the sums. Since $U_7(t^{-1})=-4-7t$, by Group~VI of Appendix~\ref{app:a4},
\[
\begin{aligned}
7r_1(-1)U_7(t^{-1})+7r_1(0)
&=7\bigl(r_1(0)-4r_1(-1)\bigr)-7^2r_1(-1)t\\
&\in X_A.
\end{aligned}
\]
For
\[
7U_7(p_0),\qquad
7U_7(p_1t^{-1}),\qquad
7U_7(p_1),
\]
the corresponding general identities, with $j=0,-1,0$, show that there are no negative powers of $t$, and that the terms at $t^0$ belong to $7\mathbb{Z}+7p_1\mathbb{Z}$. For $n\geq1$, the coefficient valuations are at least $1+\left\lfloor(7n-1)/4\right\rfloor$ in the scalar and $p_0$-parts, and $1+\left\lfloor(7n+2)/4\right\rfloor$ in the $p_1$-part. The inequalities
\[
\begin{aligned}
1+\left\lfloor\frac{7n-1}{4}\right\rfloor
&=\left\lfloor\frac{7n+3}{4}\right\rfloor
\geq\left\lfloor\frac{7n+2}{4}\right\rfloor,\\[4pt]
1+\left\lfloor\frac{7n+2}{4}\right\rfloor
&=\left\lfloor\frac{7n+6}{4}\right\rfloor
\geq\left\lfloor\frac{7n+5}{4}\right\rfloor
\end{aligned}
\]
therefore place all three expressions in $X_A$.

For the terms under the sums, $j\geq1$, and
\[
\left\lfloor\frac{7j}{4}\right\rfloor
\geq\left\lfloor\frac{7j-3}{4}\right\rfloor
=j+\left\lfloor\frac{3j-3}{4}\right\rfloor
\geq j.
\]
Combining this with the general $U_7$-identities, we use
\[
\begin{aligned}
j+\left\lfloor\frac{7n-j-1}{4}\right\rfloor
&=\left\lfloor\frac{7n+3j-1}{4}\right\rfloor
\geq\left\lfloor\frac{7n+2}{4}\right\rfloor,\\[4pt]
j+\left\lfloor\frac{7n-j+2}{4}\right\rfloor
&=\left\lfloor\frac{7n+3j+2}{4}\right\rfloor
\geq\left\lfloor\frac{7n+5}{4}\right\rfloor.
\end{aligned}
\]
The first inequality gives the required divisibility in the scalar and $p_0$-parts, and the second gives it in the $p_1$-part. Since the lower summation limits give $n\geq1$ throughout, both sums belong to $X_A$. Thus $U_7(g)\in X_A$, proving (ii).

\assertionproof{iii}
Now let $g\in X_A$. Separating the terms with $j=0$, we have
\[
\begin{aligned}
U_{A_k}(g)
={}&7r_1(0)U_{A_k}(1)+7r_3(0)U_{A_k}(p_1)\\
&+\sum_{j\geq1}7^{\left\lfloor\frac{7j+2}{4}\right\rfloor}
\bigl(r_1(j)U_{A_k}(t^j)+r_2(j)U_{A_k}(p_0t^j)\bigr)\\
&+\sum_{j\geq1}r_3(j)
7^{\left\lfloor\frac{7j+5}{4}\right\rfloor}
U_{A_k}(p_1t^j).
\end{aligned}
\]
Assertion~(i) gives $U_{A_k}(1)\in X_B$. For $U_{A_k}(p_1)$, the general expansion with $j=0$ gives the required coefficient bounds for $n\geq1$, while the coefficients at nonpositive powers of $t$ are divisible by $7$, by the first identity of Group~II in each of Appendices~\ref{app:a4}--\ref{app:a7}. Hence $U_{A_k}(p_1)\in X_B$, and the first two terms belong to $7X_B$.

For $j\geq1$, we have
\[
\left\lfloor\frac{7j+5}{4}\right\rfloor
\geq\left\lfloor\frac{7j+2}{4}\right\rfloor
=j+1+\left\lfloor\frac{3j-2}{4}\right\rfloor
\geq j+1.
\]
In each of the expansions of $U_{A_k}(t^j)$, $U_{A_k}(p_0t^j)$, and $U_{A_k}(p_1t^j)$, the coefficient valuations are at least $\left\lfloor(7n-j-3)/4\right\rfloor$ in the scalar and $p_0$-parts, and $\left\lfloor(7n-j)/4\right\rfloor$ in the $p_1$-part. For $j,n\geq1$,
\[
\begin{aligned}
\left\lfloor\frac{7j+2}{4}\right\rfloor
+\left\lfloor\frac{7n-j-3}{4}\right\rfloor
&\geq1+\left\lfloor\frac{7n+3j-3}{4}\right\rfloor\\
&\geq1+\left\lfloor\frac{7n-3}{4}\right\rfloor,\\[6pt]
\left\lfloor\frac{7j+2}{4}\right\rfloor
+\left\lfloor\frac{7n-j}{4}\right\rfloor
&\geq1+\left\lfloor\frac{7n+3j}{4}\right\rfloor\\
&\geq1+\left\lfloor\frac{7n}{4}\right\rfloor.
\end{aligned}
\]
Since $\left\lfloor(7j+5)/4\right\rfloor\geq\left\lfloor(7j+2)/4\right\rfloor$, these bounds also hold for the terms in the last sum. Thus the coefficients of $t^n$, $p_0t^n$, and $p_1t^n$ have the divisibility required in $7X_B$ for every $n\geq1$.

For the nonpositive powers, the lower summation limits allow only the five terms considered in (i). Their coefficients are integers by Lemma~\ref{lem:order_extension}. Both factors $7^{\left\lfloor(7j+2)/4\right\rfloor}$ and $7^{\left\lfloor(7j+5)/4\right\rfloor}$ are divisible by $7^2$ for $j\geq1$, so the coefficients of these five terms also have the required divisibility.

Both sums therefore belong to $7X_B$. Together with the first two terms, this gives $U_{A_k}(g)\in7X_B$, proving (iii). 
\par\medskip
\noindent
We conclude by induction on $\alpha$. The case $\alpha=0$ follows from assertions~(i) and~(ii) and the recursive definitions in Section~\ref{sec:auxiliary_functions}. Suppose that, for some $\alpha\geq0$,
\[
L_{k,2\alpha+2}\in7^\alpha X_A.
\]
By assertion~(iii), these recursions give
\[
\begin{aligned}
L_{k,2\alpha+3}&\in7^{\alpha+1}X_B &&(k=4,5),\\
L_{7,2\alpha+3}&\in7^{\alpha+1}qX_B.
\end{aligned}
\]
Applying assertion~(ii) at the next even step, we obtain
\[
L_{k,2\alpha+4}\in7^{\alpha+1}X_A.
\]
This completes the induction and proves Proposition~\ref{prop:L_alpha}.
\end{proof}

\begin{proof}[Proof of Theorem~\ref{thm:main}]
By \eqref{eq:def_t} and \eqref{eq:def_p0_p1}, the Laurent expansions of $t$, $t^{-1}$, and $p_0$ have integer coefficients. Also,
\[
f_1^7\equiv f_7\pmod7,\qquad
f_2^7\equiv f_{14}\pmod7
\quad\Longrightarrow\quad
\frac{f_{14}f_1^7}{f_7f_2^7}\equiv1\pmod7,
\]
so $p_1$ has integer coefficients. Hence \eqref{eq:def_XA} and \eqref{eq:def_XB}, together with Proposition~\ref{prop:L_alpha}, give
\[
L_{k,2\alpha-1}\equiv L_{k,2\alpha}\equiv0\pmod{7^\alpha}
\qquad(\alpha\geq1),
\]
where the congruences hold coefficientwise.

Since $F_k(q)$ has integer coefficients by \eqref{eq:gf_ak}, the generating-function identities in Section~\ref{sec:auxiliary_functions} yield
\[
\begin{aligned}
\sum_{n=0}^{\infty}
a_k\bigl(7^{2\alpha-1}n+\lambda_{k,2\alpha-1}\bigr)q^n
&=F_k(q^7)L_{k,2\alpha-1}
\equiv0\pmod{7^\alpha},\\[6pt]
\sum_{n=0}^{\infty}
a_k\bigl(7^{2\alpha}n+\lambda_{k,2\alpha}\bigr)q^n
&=F_k(q)L_{k,2\alpha}
\equiv0\pmod{7^\alpha}.
\end{aligned}
\]
Comparing coefficients gives the odd and even cases of Theorem~\ref{thm:main}, respectively.
\end{proof}

\section*{Appendices}

The identities in Appendices~\ref{app:a4}--\ref{app:a7} were obtained with Garvan's \texttt{qseries} package and proved with his \texttt{ETA} package in \texttt{Maple}~\cite{GarvanQseries,GarvanETA}. The packages and their documentation are available at \url{https://qseries.org/fgarvan/qmaple/qseries/} and  \url{https://qseries.org/fgarvan/qmaple/ETA/}.

For each identity, we first computed the $q$-expansion of the left-hand side using \eqref{eq:Up_definition}. We then used \texttt{\textbf{findlincombo}} to find a relation of the form
\[
P(t)+p_0Q(t)+p_1R(t),
\qquad P,Q,R\in\mathbb{Z}[t,t^{-1}].
\]
The coefficients of these Laurent polynomials were determined by comparing the computed expansion with those of $t^n$, $p_0t^n$, and $p_1t^n$ over a finite range of $n$. The range of exponents and the number of terms in the expansions were increased as needed. Where negative powers of $t$ occurred, we first multiplied by a suitable power of $t$ and divided by the same power after finding the relation. This procedure gave the candidate identities.

Throughout the appendices, each nonzero integer coefficient $b$ is written as $7^{\nu_7(b)}b_0$, where $7\nmid b_0$. This notation makes the powers of $7$ explicit; the integer $b_0$ is left unfactored.

To prove these identities, we used the valence-formula method described by Chen, Chen and Garvan~\cite[Section~2.2]{CCG24}, together with its application to $U_p$ identities in Garvan's tutorial~\cite[Section~3.2]{GarvanETA}. After substituting the eta-quotient expressions from Section~\ref{sec:auxiliary_functions} and expanding the terms involving $p_1$, we wrote each identity as $U_7(G)=H$, where $G$ is a linear combination of eta quotients on $\Gamma_0(98)$ and $H$ is a linear combination of eta quotients on $\Gamma_0(14)$. By Theorem~\ref{thm:Up_level}, with $p=7$ and $N=14$, the function $U_7(G)$ is modular on $\Gamma_0(14)$. Thus $U_7(G)-H$ is a modular function on this group.

We then used \texttt{\textbf{provemodfuncGAMMA0UpETAidBATCH}} with $p=7$ and $N=14$ to verify each identity. The command checks modularity and determines lower bounds for the orders at the cusps. By the valence formula, these bounds give a sufficient order of vanishing at infinity for $U_7(G)-H$ to be identically zero. The required vanishing was checked from its $q$-expansion in every case.

Where the verification required multiplication by a power of $t$, we used \eqref{eq:Up_product} in the form
\[
U_7\bigl(t(7\tau)^dG(\tau)\bigr)
=t(\tau)^dU_7\bigl(G(\tau)\bigr).
\]
Since $t$ has no zeros in $\mathbb{H}$, cancellation of this factor gives the original identity.

The \texttt{Maple} commands used to discover and verify the identities listed in the appendices are available as supplementary material \href{https://github.com/manjilsaikia/manjilsaikia.github.io/tree/da5252100f5a151b2eb6fa5142c96272c1ef70fd/publ/maple}{at this link}.

\appendix
\section[Fundamental relations for a4(n)]{Fundamental relations for \texorpdfstring{$a_4(n)$}{a4(n)}}\label{app:a4}

The six groups below give the identities for $U_{A_4}$ and $U_{B_4}$ applied to $p_0t^j$, $p_1t^j$, and $t^j$.

\par\bigskip
\noindent\textbf{Group I}\par\nobreak\medskip

\auxidentity{
U_{A_4}(p_0)=7^2t^2p_0+7t,
}

\auxidentity{
U_{A_4}(p_0t^{-1})=-7tp_0+ p_1-7t-1,
}

\auxidentity{
U_{A_4}(p_0t^{-2})=p_0\bigl(7^5t^3+2\cdot7^4t^2+54\cdot7t\bigr)+p_1\bigl(-7^4t^2-13\cdot7^2t-48\bigr)+7^4t^2+95\cdot7t+48,
}

\auxidentity{
U_{A_4}(p_0t^{-3})=p_0\bigl(7^7t^4+2\cdot7^5t^3-13\cdot7^4t^2-459\cdot7t\bigr)+p_1\bigl(-7^6t^3-7^4t^2+87\cdot7^2t+58\cdot7\bigr)+5\cdot7^5t^3-3\cdot7^3t^2-90\cdot7^2t-58\cdot7,
}

\auxidentity{
U_{A_4}(p_0t^{-4})=p_0\bigl(-7^9t^5-36\cdot7^7t^4-38\cdot7^6t^3-181\cdot7^3t^2+363\cdot7^2t+8\bigr)+p_1\bigl(7^8t^4+5\cdot7^7t^3+34\cdot7^5t^2+16\cdot7^3t-321\cdot7-t^{-1}\bigr)-7^8t^4-31\cdot7^6t^3-218\cdot7^4t^2-95\cdot7^2t+321\cdot7+t^{-1},
}

\auxidentity{
U_{A_4}(p_0t^{-5})=p_0\bigl(7^{11}t^6+41\cdot7^9t^5+734\cdot7^7t^4+727\cdot7^6t^3+1602\cdot7^4t^2-1025\cdot7^2t-160\bigr)+p_1\bigl(-7^{10}t^5-40\cdot7^8t^4-100\cdot7^7t^3-659\cdot7^5t^2-1374\cdot7^3t+913\cdot7+20t^{-1}\bigr)+6\cdot7^9t^5+265\cdot7^7t^4+93\cdot7^7t^3+4404\cdot7^4t^2+9519\cdot7^2t-6390-20t^{-1},
}

\auxidentity{
U_{A_4}(p_0t^{-6})=p_0\bigl(-7^{13}t^7-46\cdot7^{11}t^6-947\cdot7^9t^5-11264\cdot7^7t^4-9647\cdot7^6t^3-23659\cdot7^4t^2-6634\cdot7^2t+2024\bigr)+p_1\bigl(7^{12}t^6+45\cdot7^{10}t^5+907\cdot7^8t^4+216\cdot7^8t^3+8747\cdot7^5t^2+20686\cdot7^3t+5744\cdot7-253t^{-1}\bigr)-5\cdot7^{11}t^6-258\cdot7^9t^5-828\cdot7^8t^4-1423\cdot7^7t^3-59252\cdot7^4t^2-144218\cdot7^2t-821\cdot7^2+253t^{-1}.
}

\par\bigskip
\noindent\textbf{Group II}\par\nobreak\medskip

\auxidentity{
U_{A_4}(p_1)=p_0\bigl(8\cdot 7^{11}t^7+144\cdot 7^9t^6+139\cdot 7^8t^5+404\cdot 7^6t^4+417\cdot 7^4t^3-15\cdot 7^2t^2-8\cdot 7t\bigr)+p_1\bigl(7^{14}t^8+27\cdot 7^{12}t^7+293\cdot 7^{10}t^6+1604\cdot 7^8t^5+646\cdot 7^7t^4+845\cdot 7^5t^3+8\cdot 7^5t^2+114\cdot 7t+7\bigr)+7^{13}t^8+19\cdot 7^{11}t^7+141\cdot 7^9t^6+69\cdot 7^8t^5+99\cdot 7^6t^4-12\cdot 7^4t^3-297\cdot 7^2t^2-104\cdot 7t-7,
}

\auxidentity{
U_{A_4}(p_1t^{-1})=-8\cdot 7tp_0+\bigl(7^2t+7\bigr)p_1-7,
}

\auxidentity{
U_{A_4}(p_1t^{-2})=p_0\bigl(3\cdot 7^3t^2+11\cdot 7t\bigr)+p_1\bigl(-3\cdot 7^2t-9\bigr)+7^3t^2+18\cdot 7t+9,
}

\auxidentity{
U_{A_4}(p_1t^{-3})=p_0\bigl(2\cdot 7^5t^3+27\cdot 7^3t^2+131\cdot 7t\bigr)+p_1\bigl(-2\cdot 7^4t^2-24\cdot 7^2t-118\bigr)-7^5t^3+4\cdot 7^3t^2+25\cdot 7^2t+118,
}

\auxidentity{
U_{A_4}(p_1t^{-4})=p_0\bigl(2\cdot 7^7t^4-256\cdot 7^3t^2-206\cdot 7^2t-8\bigr)+p_1\bigl(-2\cdot 7^6t^3+3\cdot 7^4t^2+241\cdot 7^2t+183\cdot 7+t^{-1}\bigr)-7^7t^4+4\cdot 7^5t^3-23\cdot 7^3t^2-247\cdot 7^2t-183\cdot 7-t^{-1},
}

\auxidentity{
U_{A_4}(p_1t^{-5})=p_0\bigl(-5\cdot 7^9t^5-138\cdot 7^7t^4-143\cdot 7^6t^3-171\cdot 7^4t^2+1192\cdot 7^2t+152\bigr)+p_1\bigl(5\cdot 7^8t^4+19\cdot 7^7t^3+127\cdot 7^5t^2+130\cdot 7^3t-1058\cdot 7-19t^{-1}\bigr)-22\cdot 7^7t^4-106\cdot 7^6t^3-814\cdot 7^4t^2-862\cdot 7^2t+7407+19t^{-1},
}

\auxidentity{
U_{A_4}(p_1t^{-6})=p_0\bigl(8\cdot 7^{11}t^6+38\cdot 7^{10}t^5+3726\cdot 7^7t^4+3298\cdot 7^6t^3+7423\cdot 7^4t^2-2693\cdot 7^2t-1864\bigr)+p_1\bigl(-8\cdot 7^{10}t^5-37\cdot 7^9t^4-72\cdot 7^8t^3-2978\cdot 7^5t^2-6404\cdot 7^3t+2431\cdot 7+233t^{-1}\bigr)+7^{11}t^6+68\cdot 7^9t^5+1727\cdot 7^7t^4+3238\cdot 7^6t^3+19876\cdot 7^4t^2+907\cdot 7^4t-17029-233t^{-1}.
}

\par\bigskip
\noindent\textbf{Group III}\par\nobreak\medskip

\auxidentity{
U_{A_4}(1)=p_0\bigl(3\cdot 7^8t^5+52\cdot 7^6t^4+321\cdot 7^4t^3+111\cdot 7^3t^2+72\cdot 7t\bigr)+p_1\bigl(-3\cdot 7^7t^4-50\cdot 7^5t^3-297\cdot 7^3t^2-99\cdot 7^2t-9\cdot 7\bigr)+7^8t^5+34\cdot 7^6t^4+405\cdot 7^4t^3+2150\cdot 7^2t^2+695\cdot 7t+9\cdot 7,
}

\auxidentity{
U_{A_4}(t^{-1})=-7t,
}

\auxidentity{
U_{A_4}(t^{-2})=p_0\bigl(3\cdot 7^3t^2+11\cdot 7t\bigr)+p_1\bigl(-3\cdot 7^2t-10\bigr)+7^3t^2+26\cdot 7t+10,
}

\auxidentity{
U_{A_4}(t^{-3})=p_0\bigl(-6\cdot 7^5t^3-85\cdot 7^3t^2-5\cdot 7^3t+8\bigr)+p_1\bigl(6\cdot 7^4t^2+80\cdot 7^2t+31\cdot 7-t^{-1}\bigr)-7^5t^3-52\cdot 7^3t^2-585\cdot 7t-31\cdot 7+t^{-1},
}

\auxidentity{
U_{A_4}(t^{-4})=p_0\bigl(-6\cdot 7^7t^4-8\cdot 7^5t^3+80\cdot 7^4t^2+306\cdot 7^2t-96\bigr)+p_1\bigl(6\cdot 7^6t^3+4\cdot 7^4t^2-76\cdot 7^3t-270\cdot 7+12t^{-1}\bigr)-7^7t^4-36\cdot 7^5t^3+50\cdot 7^3t^2+550\cdot 7^2t+270\cdot 7-12t^{-1},
}

\auxidentity{
U_{A_4}(t^{-5})=p_0\bigl(3\cdot 7^9t^5+158\cdot 7^7t^4+23\cdot 7^7t^3-67\cdot 7^4t^2-1872\cdot 7^2t+808\bigr)+p_1\bigl(-3\cdot 7^8t^4-22\cdot 7^7t^3-145\cdot 7^5t^2+92\cdot 7^3t+1650\cdot 7-101t^{-1}\bigr)+34\cdot 7^7t^4+149\cdot 7^6t^3+930\cdot 7^4t^2-732\cdot 7^2t-11549+101t^{-1},
}

\auxidentity{
U_{A_4}(t^{-6})=p_0\bigl(-10\cdot 7^{10}t^5-50\cdot 7^9t^4-2822\cdot 7^6t^3-5545\cdot 7^4t^2+7179\cdot 7^2t-800\cdot 7\bigr)+p_1\bigl(68\cdot 7^8t^4+334\cdot 7^7t^3+2560\cdot 7^5t^2+4721\cdot 7^3t-6336\cdot 7+100\cdot 7t^{-1}\bigr)+7^{11}t^6+20\cdot 7^9t^5-442\cdot 7^7t^4-2236\cdot 7^6t^3-17218\cdot 7^4t^2-32640\cdot 7^2t+44332-100\cdot 7t^{-1}.
}

\par\bigskip
\noindent\textbf{Group IV}\par\nobreak\medskip

\auxidentity{
U_{B_4}(p_0)=p_0\bigl(8\cdot 7^7t^4+80\cdot 7^5t^3+216\cdot 7^3t^2+79\cdot 7t\bigr)+p_1\bigl(7^{10}t^5+19\cdot 7^8t^4+18\cdot 7^7t^3+327\cdot 7^4t^2+34\cdot 7^3t+4\bigr)+7^9t^5+11\cdot 7^7t^4+38\cdot 7^5t^3+31\cdot 7^3t^2+6\cdot 7t,
}

\auxidentity{
U_{B_4}(p_0t^{-1})=p_0\bigl(-7^3t^2-8\cdot 7t\bigr)+p_1\bigl(7^2t+7\bigr)+1-7^2t,
}

\auxidentity{
U_{B_4}(p_0t^{-2})=p_0\bigl(-7^5t^3-8\cdot 7^3t^2\bigr)+p_1\bigl(7^4t^2+7^3t\bigr)-7^4t^2-7^3t-12,
}

\auxidentity{
U_{B_4}(p_0t^{-3})=p_0\bigl(1+6\cdot 7^7t^4+62\cdot 7^5t^3+27\cdot 7^4t^2+10\cdot 7^2t\bigr)+p_1\bigl(-6\cdot 7^6t^3-8\cdot 7^5t^2-3\cdot 7^4t-4\cdot 7\bigr)+2\cdot 7^6t^3+24\cdot 7^4t^2+83\cdot 7^2t+108,
}

\auxidentity{
U_{B_4}(p_0t^{-4})=p_0\bigl(-7^9t^5-50\cdot 7^7t^4-60\cdot 7^6t^3-162\cdot 7^4t^2-60\cdot 7^2t-6\bigr)+p_1\bigl(7^8t^4+7^8t^3+54\cdot 7^5t^2+18\cdot 7^4t+24\cdot 7\bigr)-7^8t^4-25\cdot 7^6t^3-186\cdot 7^4t^2-498\cdot 7^2t-120\cdot 7,
}

\auxidentity{
U_{B_4}(p_0t^{-5})=p_0\bigl(-7^{11}t^6-8\cdot 7^9t^5+3\cdot 7^9t^4+30\cdot 7^7t^3+81\cdot 7^5t^2+30\cdot 7^3t+3\cdot 7\bigr)+p_1\bigl(7^{10}t^5+7^9t^4-3\cdot 7^8t^3-27\cdot 7^6t^2-9\cdot 7^5t-12\cdot 7^2\bigr)-7^{10}t^5-7^9t^4+9\cdot 7^7t^3+93\cdot 7^5t^2+249\cdot 7^3t+836\cdot 7,
}

\auxidentity{
U_{B_4}(p_0t^{-6})=p_0\bigl(-7^{13}t^7-8\cdot 7^{11}t^6+2\cdot 7^9t^4+20\cdot 7^7t^3+54\cdot 7^5t^2+20\cdot 7^3t+2\cdot 7\bigr)+p_1\bigl(7^{12}t^6+7^{11}t^5-2\cdot 7^8t^3-18\cdot 7^6t^2-6\cdot 7^5t-8\cdot 7^2\bigr)-7^{12}t^6-7^{11}t^5+6\cdot 7^7t^3+62\cdot 7^5t^2+166\cdot 7^3t-748\cdot 7^2.
}

\par\bigskip
\noindent\textbf{Group V}\par\nobreak\medskip

\auxidentity{
U_{B_4}(p_1t^{-1})=p_0\bigl(8\cdot 7^8t^4+80\cdot 7^6t^3+216\cdot 7^4t^2+552\cdot 7t\bigr)+p_1\bigl(7^{11}t^5+19\cdot 7^9t^4+18\cdot 7^8t^3+327\cdot 7^5t^2+34\cdot 7^4t+29\bigr)-1+7^{10}t^5+11\cdot 7^8t^4+38\cdot 7^6t^3+31\cdot 7^4t^2+41\cdot 7t,
}

\auxidentity{
U_{B_4}(p_1t^{-2})=p_0\bigl(-8\cdot 7^3t^2-8\cdot 7^2t\bigr)+p_1\bigl(8\cdot 7^2t+7^2\bigr)-7^3t^2-8\cdot 7^2t+11,
}

\auxidentity{
U_{B_4}(p_1t^{-3})=p_0\bigl(-8\cdot 7^5t^3-8\cdot 7^4t^2\bigr)+p_1\bigl(8\cdot 7^4t^2+7^4t\bigr)-7^5t^3-8\cdot 7^4t^2-7^4t-96,
}

\auxidentity{
U_{B_4}(p_1t^{-4})=p_0\bigl(48\cdot 7^7t^4+72\cdot 7^6t^3+216\cdot 7^4t^2+80\cdot 7^2t+8\bigr)+p_1\bigl(-48\cdot 7^6t^3-65\cdot 7^5t^2-24\cdot 7^4t-32\cdot 7\bigr)-7^7t^4+16\cdot 7^6t^3+199\cdot 7^4t^2+664\cdot 7^2t+740,
}

\auxidentity{
U_{B_4}(p_1t^{-5})=p_0\bigl(-8\cdot 7^9t^5-64\cdot 7^8t^4-80\cdot 7^7t^3-216\cdot 7^5t^2-80\cdot 7^3t-8\cdot 7\bigr)+p_1\bigl(8\cdot 7^8t^4+9\cdot 7^8t^3+72\cdot 7^6t^2+24\cdot 7^5t+32\cdot 7^2\bigr)-7^9t^5-8\cdot 7^8t^4-31\cdot 7^7t^3-248\cdot 7^5t^2-664\cdot 7^3t-740\cdot 7,
}

\auxidentity{
U_{B_4}(p_1t^{-6})=p_0\bigl(-8\cdot 7^{11}t^6-8\cdot 7^{10}t^5+40\cdot 7^9t^4+400\cdot 7^7t^3+1080\cdot 7^5t^2+400\cdot 7^3t+40\cdot 7\bigr)+p_1\bigl(8\cdot 7^{10}t^5+7^{10}t^4-40\cdot 7^8t^3-360\cdot 7^6t^2-120\cdot 7^5t-160\cdot 7^2\bigr)-7^{11}t^6-8\cdot 7^{10}t^5-7^{10}t^4+120\cdot 7^7t^3+1240\cdot 7^5t^2+3320\cdot 7^3t+4720\cdot 7,
}

\auxidentity{
U_{B_4}(p_1t^{-7})=p_0\bigl(-8\cdot 7^{13}t^7-8\cdot 7^{12}t^6-136\cdot 7^9t^4-1360\cdot 7^7t^3-3672\cdot 7^5t^2-1360\cdot 7^3t-136\cdot 7\bigr)+p_1\bigl(8\cdot 7^{12}t^6+6\cdot 7^{11}t^5-19\cdot 7^9t^4+818\cdot 7^7t^3+1167\cdot 7^6t^2+2798\cdot 7^4t+11\cdot 7^4+t^{-1}\bigr)+6\cdot 7^{13}t^7+20\cdot 7^{12}t^6+272\cdot 7^{10}t^5+1893\cdot 7^8t^4+435\cdot 7^7t^3-2953\cdot 7^5t^2-10620\cdot 7^3t-27029\cdot 7.
}

\par\bigskip
\noindent\textbf{Group VI}\par\nobreak\medskip

\auxidentity{
U_{B_4}(1)=1,}

\auxidentity{
U_{B_4}(t^{-1})=-7t-4,}

\auxidentity{
U_{B_4}(t^{-2})=-7^3t^2+20,}

\auxidentity{
U_{B_4}(t^{-3})=-7^5t^3-88,}

\auxidentity{
U_{B_4}(t^{-4})=-7^7t^4+260,}

\auxidentity{
U_{B_4}(t^{-5})=-7^9t^5+68\cdot 7,}

\auxidentity{
U_{B_4}(t^{-6})=-7^{11}t^6-2392\cdot 7.}

\setcounter{section}{1}
\section[Fundamental relations for a5(n)]{Fundamental relations for \texorpdfstring{$a_5(n)$}{a5(n)}}\label{app:a5}

The six groups below give the identities for $U_{A_5}$ and $U_{B_5}$ applied to $p_0t^j$, $p_1t^j$, and $t^j$.

\par\bigskip
\noindent\textbf{Group I}\par\nobreak\medskip

\auxidentity{U_{A_5}(p_0)=p_0\bigl(7^4t^3+15\cdot7^2t^2+8\cdot7t\bigr)+p_1\bigl(-7^3t^2-2\cdot7^2t-7\bigr)+7^3t^2+2\cdot7^2t+7,}

\auxidentity{U_{A_5}(p_0t^{-1})=p_0\bigl(-7^3t^2-8\cdot7t\bigr)+p_1\bigl(7^2t+7\bigr)-7^2t-7,}

\auxidentity{U_{A_5}(p_0t^{-2})=p_0\bigl(7^4t^2+9\cdot7^2t+8\bigr)+p_1\bigl(-7^3t-8\cdot7-t^{-1}\bigr)+7^3t+8\cdot7+t^{-1},}

\auxidentity{U_{A_5}(p_0t^{-3})=p_0\bigl(7^7t^4+19\cdot7^5t^3+11\cdot7^4t^2-165\cdot7t-88\bigr)+p_1\bigl(-7^6t^3-18\cdot7^4t^2-66\cdot7^2t+22\cdot7+11t^{-1}\bigr)+7^6t^3+18\cdot7^4t^2+66\cdot7^2t-22\cdot7-11t^{-1},}

\auxidentity{U_{A_5}(p_0t^{-4})=p_0\bigl(7^9t^5+8\cdot7^7t^4-90\cdot7^5t^3-860\cdot7^3t^2-10\cdot7^3t+720\bigr)+p_1\bigl(-7^8t^4-7^7t^3+90\cdot7^4t^2+110\cdot7^3t+50\cdot7-90t^{-1}\bigr)+7^8t^4+7^7t^3-90\cdot7^4t^2-110\cdot7^3t-50\cdot7+90t^{-1},}

\auxidentity{U_{A_5}(p_0t^{-5})=p_0\bigl(-7^{11}t^6-46\cdot7^9t^5-570\cdot7^7t^4-323\cdot7^6t^3+57\cdot7^4t^2+1045\cdot7^2t-5016\bigr)+p_1\bigl(7^{10}t^5+45\cdot7^8t^4+76\cdot7^7t^3+285\cdot7^5t^2-76\cdot7^3t-836\cdot7+627t^{-1}\bigr)-7^{10}t^5-45\cdot7^8t^4-76\cdot7^7t^3-285\cdot7^5t^2+76\cdot7^3t+836\cdot7-627t^{-1},}

\auxidentity{U_{A_5}(p_0t^{-6})=p_0\bigl(7^{13}t^7+54\cdot7^{11}t^6+1242\cdot7^9t^5+13110\cdot7^7t^4+8947\cdot7^6t^3+2139\cdot7^5t^2-1541\cdot7^2t+30360\bigr)+p_1\bigl(-7^{12}t^6-53\cdot7^{10}t^5-1196\cdot7^8t^4-1748\cdot7^7t^3-8073\cdot7^5t^2-13018\cdot7^3t+874\cdot7-3795t^{-1}\bigr)+7^{12}t^6+53\cdot7^{10}t^5+1196\cdot7^8t^4+1748\cdot7^7t^3+8073\cdot7^5t^2+13018\cdot7^3t-874\cdot7+3795t^{-1}.}

\par\bigskip
\noindent\textbf{Group II}\par\nobreak\medskip

\auxidentity{U_{A_5}(p_1)=p_0\bigl(-4\cdot7^9t^6-12\cdot7^8t^5-108\cdot7^6t^4-480\cdot7^4t^3-984\cdot7^2t^2-88\cdot7t\bigr)+p_1\bigl(7^{10}t^6+22\cdot7^8t^5+29\cdot7^7t^4+152\cdot7^5t^3+492\cdot7^3t^2+884\cdot7t+11\cdot7\bigr)-5\cdot7^8t^5-99\cdot7^6t^4-809\cdot7^4t^3-3254\cdot7^2t^2-883\cdot7t-11\cdot7,}

\auxidentity{U_{A_5}(p_1t^{-1})= 7t,}

\auxidentity{U_{A_5}(p_1t^{-2})=p_0\bigl(-8\cdot7^3t^2-64\cdot7t-8\bigr)+p_1\bigl(8\cdot7^2t+57+t^{-1}\bigr)-7^3t^2-58\cdot7t-57-t^{-1},}

\auxidentity{U_{A_5}(p_1t^{-3})=p_0\bigl(4\cdot7^5t^3+92\cdot7^3t^2+540\cdot7t+80\bigr)+p_1\bigl(-4\cdot7^4t^2-89\cdot7^2t-482-10t^{-1}\bigr)+7^5t^3+38\cdot7^3t^2+633\cdot7t+482+10t^{-1},}

\auxidentity{U_{A_5}(p_1t^{-4})=p_0\bigl(-20\cdot7^5t^3-484\cdot7^3t^2-436\cdot7^2t-632\bigr)+p_1\bigl(3\cdot7^5t^2+472\cdot7^2t+8\cdot7^3+79t^{-1}\bigr)-7^7t^4-18\cdot7^5t^3-237\cdot7^3t^2-480\cdot7^2t-8\cdot7^3-79t^{-1},}

\auxidentity{U_{A_5}(p_1t^{-5})=p_0\bigl(8\cdot7^7t^4+20\cdot7^6t^3+292\cdot7^4t^2+1964\cdot7^2t+4360\bigr)+p_1\bigl(-7^7t^3-20\cdot7^5t^2-286\cdot7^3t-1790\cdot7-545t^{-1}\bigr)-7^9t^5+25\cdot7^6t^3+230\cdot7^4t^2+2046\cdot7^2t+1790\cdot7+545t^{-1},}

\auxidentity{U_{A_5}(p_1t^{-6})=p_0\bigl(-4\cdot7^{11}t^6-116\cdot7^9t^5-1460\cdot7^7t^4-176\cdot7^7t^3-3560\cdot7^4t^2-7992\cdot7^2t-26624\bigr)+p_1\bigl(4\cdot7^{10}t^5+16\cdot7^9t^4+194\cdot7^7t^3+1123\cdot7^5t^2+3226\cdot7^3t+7425\cdot7+3328t^{-1}\bigr)-11\cdot7^9t^5-680\cdot7^7t^4-1433\cdot7^6t^3-8391\cdot7^4t^2-22809\cdot7^2t-51974-3328t^{-1}.}

\par\bigskip
\noindent\textbf{Group III}\par\nobreak\medskip

\auxidentity{U_{A_5}(1)=p_0\bigl(4\cdot7^{10}t^6+92\cdot7^8t^5+828\cdot7^6t^4+3552\cdot7^4t^3+992\cdot7^3t^2+632\cdot7t\bigr)+p_1\bigl(-4\cdot7^9t^5-89\cdot7^7t^4-776\cdot7^5t^3-3231\cdot7^3t^2-880\cdot7^2t-79\cdot7\bigr)+7^{10}t^6+45\cdot7^8t^5+727\cdot7^6t^4+5687\cdot7^4t^3+22807\cdot7^2t^2+6161\cdot7t+79\cdot7,}

\auxidentity{U_{A_5}(t^{-1})=p_0\bigl(8\cdot7t+8\bigr)+p_1\bigl(-8-t^{-1}\bigr)+7t+8+t^{-1},}

\auxidentity{U_{A_5}(t^{-2})=p_0\bigl(-8\cdot7t-24\bigr)+p_1\bigl(10+3t^{-1}\bigr)-7^3t^2-2\cdot7t-10-3t^{-1},}

\auxidentity{U_{A_5}(t^{-3})=p_0\bigl(4\cdot7^5t^3+4\cdot7^4t^2-4\cdot7^2t+80\bigr)+p_1\bigl(-4\cdot7^4t^2-24\cdot7^2t+2\cdot7-10t^{-1}\bigr)+7^5t^3+38\cdot7^3t^2+178\cdot7t-2\cdot7+10t^{-1},}

\auxidentity{U_{A_5}(t^{-4})=p_0\bigl(-8\cdot7^7t^4-172\cdot7^5t^3-20\cdot7^5t^2-12\cdot7^3t-88\bigr)+p_1\bigl(8\cdot7^6t^3+164\cdot7^4t^2+124\cdot7^3t+76\cdot7+11t^{-1}\bigr)-7^7t^4-74\cdot7^5t^3-1238\cdot7^3t^2-876\cdot7^2t-76\cdot7-11t^{-1},}

\auxidentity{U_{A_5}(t^{-5})=p_0\bigl(-8\cdot7^9t^5-48\cdot7^7t^4+20\cdot7^7t^3+1196\cdot7^4t^2+1044\cdot7^2t-2024\bigr)+p_1\bigl(8\cdot7^8t^4+6\cdot7^7t^3-136\cdot7^5t^2-1062\cdot7^3t-886\cdot7+253t^{-1}\bigr)-7^9t^5-8\cdot7^8t^4-24\cdot7^6t^3+1042\cdot7^4t^2+7478\cdot7^2t+886\cdot7-253t^{-1},}

\auxidentity{U_{A_5}(t^{-6})=p_0\bigl(4\cdot7^{11}t^6+260\cdot7^9t^5+452\cdot7^8t^4+1312\cdot7^6t^3-3256\cdot7^4t^2-4664\cdot7^2t+3952\cdot7\bigr)+p_1\bigl(-4\cdot7^{10}t^5-257\cdot7^8t^4-426\cdot7^7t^3-1157\cdot7^5t^2+2882\cdot7^3t+3655\cdot7-494\cdot7t^{-1}\bigr)+45\cdot7^9t^5+1903\cdot7^7t^4+2907\cdot7^6t^3+7569\cdot7^4t^2-20401\cdot7^2t-25584+494\cdot7t^{-1}.}

\par\bigskip
\noindent\textbf{Group IV}\par\nobreak\medskip

\auxidentity{U_{B_5}(p_0)=p_0\bigl(8\cdot7^7t^4+80\cdot7^5t^3+216\cdot7^3t^2+79\cdot7t\bigr)+p_1\bigl(7^{10}t^5+19\cdot7^8t^4+18\cdot7^7t^3+327\cdot7^4t^2+34\cdot7^3t+4\bigr)+7^9t^5+11\cdot7^7t^4+38\cdot7^5t^3+31\cdot7^3t^2+6\cdot7t,}

\auxidentity{U_{B_5}(p_0t^{-1})=p_0\bigl(-7^3t^2-8\cdot7t\bigr)+p_1\bigl(7^2t+7\bigr)-7^2t+1,}

\auxidentity{U_{B_5}(p_0t^{-2})=p_0\bigl(-7^5t^3-8\cdot7^3t^2\bigr)+p_1\bigl(7^4t^2+7^3t\bigr)-7^4t^2-7^3t-12,}

\auxidentity{U_{B_5}(p_0t^{-3})=p_0\bigl(6\cdot7^7t^4+62\cdot7^5t^3+27\cdot7^4t^2+10\cdot7^2t+1\bigr)+p_1\bigl(-6\cdot7^6t^3-8\cdot7^5t^2-3\cdot7^4t-4\cdot7\bigr)+2\cdot7^6t^3+24\cdot7^4t^2+83\cdot7^2t+108,}

\auxidentity{U_{B_5}(p_0t^{-4})=p_0\bigl(-7^9t^5-50\cdot7^7t^4-60\cdot7^6t^3-162\cdot7^4t^2-60\cdot7^2t-6\bigr)+p_1\bigl(7^8t^4+7^8t^3+54\cdot7^5t^2+18\cdot7^4t+24\cdot7\bigr)-7^8t^4-25\cdot7^6t^3-186\cdot7^4t^2-498\cdot7^2t-120\cdot7,}

\auxidentity{U_{B_5}(p_0t^{-5})=p_0\bigl(-7^{11}t^6-8\cdot7^9t^5+3\cdot7^9t^4+30\cdot7^7t^3+81\cdot7^5t^2+30\cdot7^3t+3\cdot7\bigr)+p_1\bigl(7^{10}t^5+7^9t^4-3\cdot7^8t^3-27\cdot7^6t^2-9\cdot7^5t-12\cdot7^2\bigr)-7^{10}t^5-7^9t^4+9\cdot7^7t^3+93\cdot7^5t^2+249\cdot7^3t+836\cdot7,}

\auxidentity{U_{B_5}(p_0t^{-6})=p_0\bigl(-7^{13}t^7-8\cdot7^{11}t^6+2\cdot7^9t^4+20\cdot7^7t^3+54\cdot7^5t^2+20\cdot7^3t+2\cdot7\bigr)+p_1\bigl(7^{12}t^6+7^{11}t^5-2\cdot7^8t^3-18\cdot7^6t^2-6\cdot7^5t-8\cdot7^2\bigr)-7^{12}t^6-7^{11}t^5+6\cdot7^7t^3+62\cdot7^5t^2+166\cdot7^3t-748\cdot7^2.}

\par\bigskip
\noindent\textbf{Group V}\par\nobreak\medskip

\auxidentity{U_{B_5}(p_1t^{-1})=p_0\bigl(8\cdot7^8t^4+80\cdot7^6t^3+216\cdot7^4t^2+552\cdot7t\bigr)+p_1\bigl(7^{11}t^5+19\cdot7^9t^4+18\cdot7^8t^3+327\cdot7^5t^2+34\cdot7^4t+29\bigr)+7^{10}t^5+11\cdot7^8t^4+38\cdot7^6t^3+31\cdot7^4t^2+41\cdot7t-1,}

\auxidentity{U_{B_5}(p_1t^{-2})=p_0\bigl(-8\cdot7^3t^2-8\cdot7^2t\bigr)+p_1\bigl(8\cdot7^2t+7^2\bigr)-7^3t^2-8\cdot7^2t+11,}

\auxidentity{U_{B_5}(p_1t^{-3})=p_0\bigl(-8\cdot7^5t^3-8\cdot7^4t^2\bigr)+p_1\bigl(8\cdot7^4t^2+7^4t\bigr)-7^5t^3-8\cdot7^4t^2-7^4t-96,}

\auxidentity{U_{B_5}(p_1t^{-4})=p_0\bigl(48\cdot7^7t^4+72\cdot7^6t^3+216\cdot7^4t^2+80\cdot7^2t+8\bigr)+p_1\bigl(-48\cdot7^6t^3-65\cdot7^5t^2-24\cdot7^4t-32\cdot7\bigr)-7^7t^4+16\cdot7^6t^3+199\cdot7^4t^2+664\cdot7^2t+740,}

\auxidentity{U_{B_5}(p_1t^{-5})=p_0\bigl(-8\cdot7^9t^5-64\cdot7^8t^4-80\cdot7^7t^3-216\cdot7^5t^2-80\cdot7^3t-8\cdot7\bigr)+p_1\bigl(8\cdot7^8t^4+9\cdot7^8t^3+72\cdot7^6t^2+24\cdot7^5t+32\cdot7^2\bigr)-7^9t^5-8\cdot7^8t^4-31\cdot7^7t^3-248\cdot7^5t^2-664\cdot7^3t-740\cdot7,}

\auxidentity{U_{B_5}(p_1t^{-6})=p_0\bigl(-8\cdot7^{11}t^6-8\cdot7^{10}t^5+40\cdot7^9t^4+400\cdot7^7t^3+1080\cdot7^5t^2+400\cdot7^3t+40\cdot7\bigr)+p_1\bigl(8\cdot7^{10}t^5+7^{10}t^4-40\cdot7^8t^3-360\cdot7^6t^2-120\cdot7^5t-160\cdot7^2\bigr)-7^{11}t^6-8\cdot7^{10}t^5-7^{10}t^4+120\cdot7^7t^3+1240\cdot7^5t^2+3320\cdot7^3t+4720\cdot7,}

\auxidentity{U_{B_5}(p_1t^{-7})=p_0\bigl(-8\cdot7^{13}t^7-8\cdot7^{12}t^6-136\cdot7^9t^4-1360\cdot7^7t^3-3672\cdot7^5t^2-1360\cdot7^3t-136\cdot7\bigr)+p_1\bigl(8\cdot7^{12}t^6+6\cdot7^{11}t^5-19\cdot7^9t^4+818\cdot7^7t^3+1167\cdot7^6t^2+2798\cdot7^4t+11\cdot7^4+t^{-1}\bigr)+6\cdot7^{13}t^7+20\cdot7^{12}t^6+272\cdot7^{10}t^5+1893\cdot7^8t^4+435\cdot7^7t^3-2953\cdot7^5t^2-10620\cdot7^3t-27029\cdot7.}

\par\bigskip
\noindent\textbf{Group VI}\par\nobreak\medskip

\auxidentity{U_{B_5}(1)=1,}

\auxidentity{U_{B_5}(t^{-1})=-7t-4,}

\auxidentity{U_{B_5}(t^{-2})=-7^3t^2+20,}

\auxidentity{U_{B_5}(t^{-3})=-7^5t^3-88,}

\auxidentity{U_{B_5}(t^{-4})=-7^7t^4+260,}

\auxidentity{U_{B_5}(t^{-5})=-7^9t^5+68\cdot7,}

\auxidentity{U_{B_5}(t^{-6})=-7^{11}t^6-2392\cdot7.}

\setcounter{section}{2}
\section[Fundamental relations for a7(n)]{Fundamental relations for \texorpdfstring{$a_7(n)$}{a7(n)}}\label{app:a7}

The six groups below give the identities for $U_{A_7}$ and $U_{B_7}$ applied to $p_0t^j$, $p_1t^j$, and $t^j$.

\par\bigskip
\noindent\textbf{Group I}\par\nobreak\medskip

\auxidentity{U_{A_7}(p_0)=p_0\bigl(7^8t^5+34\cdot7^6t^4+405\cdot7^4t^3+2150\cdot7^2t^2+688\cdot7t+8\cdot7\bigr)+p_1\bigl(-7^7t^4-33\cdot7^5t^3-381\cdot7^3t^2-40\cdot7^3t-87\cdot7-7t^{-1}\bigr)+9\cdot7^6t^4+255\cdot7^4t^3+393\cdot7^3t^2+1968\cdot7t+87\cdot7+7t^{-1},}

\auxidentity{U_{A_7}(p_0t^{-1})=p_0\bigl(7^3t^2+24\cdot7t+16\bigr)+p_1\bigl(-7^2t-23-2t^{-1}\bigr)+9\cdot7t+23+2t^{-1},}

\auxidentity{U_{A_7}(p_0t^{-2})=p_0\bigl(-7^5t^3-44\cdot7^3t^2-464\cdot7t-208\bigr)+p_1\bigl(7^4t^2+43\cdot7^2t+432+26t^{-1}\bigr)-11\cdot7^3t^2-47\cdot7^2t-432-26t^{-1},}

\auxidentity{U_{A_7}(p_0t^{-3})=p_0\bigl(-7^7t^4-32\cdot7^5t^3-54\cdot7^3t^2+2096\cdot7t+192\cdot7\bigr)+p_1\bigl(7^6t^3+31\cdot7^4t^2+34\cdot7^2t-286\cdot7-24\cdot7t^{-1}\bigr)-11\cdot7^5t^3-209\cdot7^3t^2-8\cdot7^2t+286\cdot7+24\cdot7t^{-1},}

\auxidentity{U_{A_7}(p_0t^{-4})=p_0\bigl(26\cdot7^7t^4+741\cdot7^5t^3+5034\cdot7^3t^2+353\cdot7^2t-6432\bigr)+p_1\bigl(-26\cdot7^6t^3-717\cdot7^4t^2-652\cdot7^3t-194\cdot7+804t^{-1}\bigr)+2\cdot7^7t^4+262\cdot7^5t^3+107\cdot7^5t^2+4440\cdot7^2t+194\cdot7-804t^{-1},}

\auxidentity{U_{A_7}(p_0t^{-5})=p_0\bigl(7^{11}t^6+26\cdot7^9t^5-94\cdot7^7t^4-1170\cdot7^6t^3-10022\cdot7^4t^2-22922\cdot7^2t+16152+8t^{-1}\bigr)+p_1\bigl(-7^{10}t^5-25\cdot7^8t^4+16\cdot7^7t^3+163\cdot7^6t^2+9220\cdot7^3t+19769\cdot7-2020t^{-1}-t^{-2}\bigr)+7^{10}t^5+139\cdot7^7t^4-36\cdot7^7t^3-8539\cdot7^4t^2-64240\cdot7^2t-138375+2020t^{-1}+t^{-2},}

\auxidentity{U_{A_7}(p_0t^{-6})=p_0\bigl(7^{13}t^7+32\cdot7^{11}t^6+445\cdot7^9t^5+7342\cdot7^7t^4+16095\cdot7^6t^3+115958\cdot7^4t^2+315003\cdot7^2t+109656-24\cdot7t^{-1}\bigr)+p_1\bigl(-7^{12}t^6-31\cdot7^{10}t^5-425\cdot7^8t^4-1019\cdot7^7t^3-15420\cdot7^5t^2-106814\cdot7^3t-277603\cdot7-13686t^{-1}+3\cdot7t^{-2}\bigr)+11\cdot7^{11}t^6+297\cdot7^9t^5+571\cdot7^8t^4+8781\cdot7^6t^3+114308\cdot7^4t^2+749998\cdot7^2t+277579\cdot7+13686t^{-1}-3\cdot7t^{-2}.}

\par\bigskip
\noindent\textbf{Group II}\par\nobreak\medskip

\auxidentity{U_{A_7}(p_1)=p_0\bigl(-6\cdot7^{13}t^8-212\cdot7^{11}t^7-3198\cdot7^9t^6-3788\cdot7^8t^5-18486\cdot7^6t^4-52512\cdot7^4t^3-79404\cdot7^2t^2-7152\cdot7t-16\cdot7\bigr)+p_1\bigl(6\cdot7^{12}t^7+207\cdot7^{10}t^6+3053\cdot7^8t^5+3540\cdot7^7t^4+16935\cdot7^5t^3+47245\cdot7^3t^2+70359\cdot7t+128\cdot7^2+2\cdot7t^{-1}\bigr)-7^{13}t^8-10\cdot7^{12}t^7-1773\cdot7^9t^6-3333\cdot7^8t^5-25701\cdot7^6t^4-120081\cdot7^4t^3-331735\cdot7^2t^2-70375\cdot7t-128\cdot7^2-2\cdot7t^{-1},}

\auxidentity{U_{A_7}(p_1t^{-1})=p_0\bigl(-8\cdot7t-8\bigr)+p_1\bigl(8+t^{-1}\bigr)-7t-8-t^{-1},}

\auxidentity{U_{A_7}(p_1t^{-2})=p_0\bigl(8\cdot7^3t^2+120\cdot7t+64\bigr)+p_1\bigl(-8\cdot7^2t-113-8t^{-1}\bigr)+7^3t^2+64\cdot7t+113+8t^{-1},}

\auxidentity{U_{A_7}(p_1t^{-3})=p_0\bigl(4\cdot7^5t^3-48\cdot7^3t^2-160\cdot7^2t-624\bigr)+p_1\bigl(-4\cdot7^4t^2+51\cdot7^2t+1058+78t^{-1}\bigr)+7^5t^3+20\cdot7^3t^2-439\cdot7t-1058-78t^{-1},}

\auxidentity{U_{A_7}(p_1t^{-4})=p_0\bigl(-8\cdot7^7t^4-248\cdot7^5t^3-1528\cdot7^3t^2+288\cdot7^2t+544\cdot7\bigr)+p_1\bigl(8\cdot7^6t^3+241\cdot7^4t^2+1368\cdot7^2t-320\cdot7-68\cdot7t^{-1}\bigr)-7^7t^4-88\cdot7^5t^3-249\cdot7^4t^2-1296\cdot7^2t+320\cdot7+68\cdot7t^{-1},}

\auxidentity{U_{A_7}(p_1t^{-5})=p_0\bigl(6\cdot7^9t^5+292\cdot7^7t^4+762\cdot7^6t^3+5116\cdot7^4t^2+9322\cdot7^2t-14712-8t^{-1}\bigr)+p_1\bigl(-6\cdot7^8t^4-41\cdot7^7t^3-731\cdot7^5t^2-4680\cdot7^3t-7895\cdot7+1840t^{-1}+t^{-2}\bigr)+7^9t^5+78\cdot7^7t^4+367\cdot7^6t^3+5373\cdot7^4t^2+32488\cdot7^2t+55257-1840t^{-1}-t^{-2},}

\auxidentity{U_{A_7}(p_1t^{-6})=p_0\bigl(-4\cdot7^{11}t^6-232\cdot7^9t^5-5924\cdot7^7t^4-11280\cdot7^6t^3-70204\cdot7^4t^2-166408\cdot7^2t-12528+160t^{-1}\bigr)+p_1\bigl(4\cdot7^{10}t^5+229\cdot7^8t^4+822\cdot7^7t^3+1532\cdot7^6t^2+64370\cdot7^3t+145850\cdot7+1546t^{-1}-20t^{-2}\bigr)-7^{11}t^6-68\cdot7^9t^5-2323\cdot7^7t^4-6810\cdot7^6t^3-78618\cdot7^4t^2-450958\cdot7^2t-1020790-1546t^{-1}+20t^{-2}.}

\par\bigskip
\noindent\textbf{Group III}\par\nobreak\medskip

\auxidentity{U_{A_7}(1)=p_0\bigl(6\cdot7^{14}t^8+212\cdot7^{12}t^7+3198\cdot7^{10}t^6+3788\cdot7^9t^5+18486\cdot7^7t^4+52512\cdot7^5t^3+79404\cdot7^3t^2+50072\cdot7t+16\cdot7^2\bigr)+p_1\bigl(-6\cdot7^{13}t^7-207\cdot7^{11}t^6-3053\cdot7^9t^5-3540\cdot7^8t^4-16935\cdot7^6t^3-330714\cdot7^3t^2-70359\cdot7^2t-6273\cdot7-2\cdot7^2t^{-1}\bigr)+7^{14}t^8+10\cdot7^{13}t^7+1773\cdot7^{10}t^6+3333\cdot7^9t^5+25701\cdot7^7t^4+120081\cdot7^5t^3+2322146\cdot7^2t^2+70375\cdot7^2t+6273\cdot7+2\cdot7^2t^{-1},}

\auxidentity{U_{A_7}(t^{-1})=p_0\bigl(48\cdot7t+48\bigr)+p_1\bigl(-48-6t^{-1}\bigr)+7^2t+48+6t^{-1},}

\auxidentity{U_{A_7}(t^{-2})=p_0\bigl(-80\cdot7t-64\bigr)+p_1\bigl(78+8t^{-1}\bigr)+7^3t^2-8\cdot7t-78-8t^{-1},}

\auxidentity{U_{A_7}(t^{-3})=p_0\bigl(12\cdot7^5t^3+312\cdot7^3t^2+400\cdot7^2t+160\cdot7\bigr)+p_1\bigl(-12\cdot7^4t^2-302\cdot7^2t-370\cdot7-20\cdot7t^{-1}\bigr)+7^5t^3+108\cdot7^3t^2+2264\cdot7t+370\cdot7+20\cdot7t^{-1},}

\auxidentity{U_{A_7}(t^{-4})=p_0\bigl(-216\cdot7^4t^2-2776\cdot7^2t-1240\cdot7+8t^{-1}\bigr)+p_1\bigl(7^4t^2+216\cdot7^3t+2585\cdot7+1084t^{-1}-t^{-2}\bigr)-7^7t^4-167\cdot7^3t^2-240\cdot7^3t-18087-1084t^{-1}+t^{-2},}

\auxidentity{U_{A_7}(t^{-5})=p_0\bigl(6\cdot7^9t^5+76\cdot7^7t^4-18\cdot7^7t^3-580\cdot7^4t^2+10898\cdot7^2t+51864-104t^{-1}\bigr)+p_1\bigl(-6\cdot7^8t^4-10\cdot7^7t^3+129\cdot7^5t^2+446\cdot7^3t-10475\cdot7-6470t^{-1}+13t^{-2}\bigr)+7^9t^5+62\cdot7^7t^4+54\cdot7^6t^3-871\cdot7^4t^2-302\cdot7^3t+73221+6470t^{-1}-13t^{-2},}

\auxidentity{U_{A_7}(t^{-6})=p_0\bigl(-12\cdot7^{11}t^6-440\cdot7^9t^5-100\cdot7^9t^4-104\cdot7^7t^3+17412\cdot7^4t^2+11112\cdot7^2t-33448\cdot7+904t^{-1}\bigr)+p_1\bigl(12\cdot7^{10}t^5+430\cdot7^8t^4+654\cdot7^7t^3+433\cdot7^5t^2-2290\cdot7^4t-5427\cdot7+29154t^{-1}-113t^{-2}\bigr)-7^{11}t^6-124\cdot7^9t^5-3410\cdot7^7t^4-4386\cdot7^6t^3-1965\cdot7^4t^2+107666\cdot7^2t+38893-29154t^{-1}+113t^{-2}.}

\par\bigskip
\noindent\textbf{Group IV}\par\nobreak\medskip

\auxidentity{U_{B_7}(qp_0)=p_0\bigl(8\cdot7^7t^4+80\cdot7^5t^3+216\cdot7^3t^2+79\cdot7t\bigr)+p_1\bigl(7^{10}t^5+19\cdot7^8t^4+18\cdot7^7t^3+327\cdot7^4t^2+34\cdot7^3t+4\bigr)+7^9t^5+11\cdot7^7t^4+38\cdot7^5t^3+31\cdot7^3t^2+6\cdot7t,}

\auxidentity{U_{B_7}(qp_0t^{-1})=p_0\bigl(-7^3t^2-8\cdot7t\bigr)+p_1\bigl(7^2t+7\bigr)-7^2t+1,}

\auxidentity{U_{B_7}(qp_0t^{-2})=p_0\bigl(-7^5t^3-8\cdot7^3t^2\bigr)+p_1\bigl(7^4t^2+7^3t\bigr)-7^4t^2-7^3t-12,}

\auxidentity{U_{B_7}(qp_0t^{-3})=p_0\bigl(6\cdot7^7t^4+62\cdot7^5t^3+27\cdot7^4t^2+10\cdot7^2t+1\bigr)+p_1\bigl(-6\cdot7^6t^3-8\cdot7^5t^2-3\cdot7^4t-4\cdot7\bigr)+2\cdot7^6t^3+24\cdot7^4t^2+83\cdot7^2t+108,}

\auxidentity{U_{B_7}(qp_0t^{-4})=p_0\bigl(-7^9t^5-50\cdot7^7t^4-60\cdot7^6t^3-162\cdot7^4t^2-60\cdot7^2t-6\bigr)+p_1\bigl(7^8t^4+7^8t^3+54\cdot7^5t^2+18\cdot7^4t+24\cdot7\bigr)-7^8t^4-25\cdot7^6t^3-186\cdot7^4t^2-498\cdot7^2t-120\cdot7,}

\auxidentity{U_{B_7}(qp_0t^{-5})=p_0\bigl(-7^{11}t^6-8\cdot7^9t^5+3\cdot7^9t^4+30\cdot7^7t^3+81\cdot7^5t^2+30\cdot7^3t+3\cdot7\bigr)+p_1\bigl(7^{10}t^5+7^9t^4-3\cdot7^8t^3-27\cdot7^6t^2-9\cdot7^5t-12\cdot7^2\bigr)-7^{10}t^5-7^9t^4+9\cdot7^7t^3+93\cdot7^5t^2+249\cdot7^3t+836\cdot7,}

\auxidentity{U_{B_7}(qp_0t^{-6})=p_0\bigl(-7^{13}t^7-8\cdot7^{11}t^6+2\cdot7^9t^4+20\cdot7^7t^3+54\cdot7^5t^2+20\cdot7^3t+2\cdot7\bigr)+p_1\bigl(7^{12}t^6+7^{11}t^5-2\cdot7^8t^3-18\cdot7^6t^2-6\cdot7^5t-8\cdot7^2\bigr)-7^{12}t^6-7^{11}t^5+6\cdot7^7t^3+62\cdot7^5t^2+166\cdot7^3t-748\cdot7^2.}

\par\bigskip
\noindent\textbf{Group V}\par\nobreak\medskip

\auxidentity{U_{B_7}(qp_1t^{-1})=p_0\bigl(8\cdot7^8t^4+80\cdot7^6t^3+216\cdot7^4t^2+552\cdot7t\bigr)+p_1\bigl(7^{11}t^5+19\cdot7^9t^4+18\cdot7^8t^3+327\cdot7^5t^2+34\cdot7^4t+29\bigr)+7^{10}t^5+11\cdot7^8t^4+38\cdot7^6t^3+31\cdot7^4t^2+41\cdot7t-1,}

\auxidentity{U_{B_7}(qp_1t^{-2})=p_0\bigl(-8\cdot7^3t^2-8\cdot7^2t\bigr)+p_1\bigl(8\cdot7^2t+7^2\bigr)-7^3t^2-8\cdot7^2t+11,}

\auxidentity{U_{B_7}(qp_1t^{-3})=p_0\bigl(-8\cdot7^5t^3-8\cdot7^4t^2\bigr)+p_1\bigl(8\cdot7^4t^2+7^4t\bigr)-7^5t^3-8\cdot7^4t^2-7^4t-96,}

\auxidentity{U_{B_7}(qp_1t^{-4})=p_0\bigl(48\cdot7^7t^4+72\cdot7^6t^3+216\cdot7^4t^2+80\cdot7^2t+8\bigr)+p_1\bigl(-48\cdot7^6t^3-65\cdot7^5t^2-24\cdot7^4t-32\cdot7\bigr)-7^7t^4+16\cdot7^6t^3+199\cdot7^4t^2+664\cdot7^2t+740,}

\auxidentity{U_{B_7}(qp_1t^{-5})=p_0\bigl(-8\cdot7^9t^5-64\cdot7^8t^4-80\cdot7^7t^3-216\cdot7^5t^2-80\cdot7^3t-8\cdot7\bigr)+p_1\bigl(8\cdot7^8t^4+9\cdot7^8t^3+72\cdot7^6t^2+24\cdot7^5t+32\cdot7^2\bigr)-7^9t^5-8\cdot7^8t^4-31\cdot7^7t^3-248\cdot7^5t^2-664\cdot7^3t-740\cdot7,}

\auxidentity{U_{B_7}(qp_1t^{-6})=p_0\bigl(-8\cdot7^{11}t^6-8\cdot7^{10}t^5+40\cdot7^9t^4+400\cdot7^7t^3+1080\cdot7^5t^2+400\cdot7^3t+40\cdot7\bigr)+p_1\bigl(8\cdot7^{10}t^5+7^{10}t^4-40\cdot7^8t^3-360\cdot7^6t^2-120\cdot7^5t-160\cdot7^2\bigr)-7^{11}t^6-8\cdot7^{10}t^5-7^{10}t^4+120\cdot7^7t^3+1240\cdot7^5t^2+3320\cdot7^3t+4720\cdot7,}

\auxidentity{U_{B_7}(qp_1t^{-7})=p_0\bigl(-8\cdot7^{13}t^7-8\cdot7^{12}t^6-136\cdot7^9t^4-1360\cdot7^7t^3-3672\cdot7^5t^2-1360\cdot7^3t-136\cdot7\bigr)+p_1\bigl(8\cdot7^{12}t^6+6\cdot7^{11}t^5-19\cdot7^9t^4+818\cdot7^7t^3+1167\cdot7^6t^2+2798\cdot7^4t+11\cdot7^4+t^{-1}\bigr)+6\cdot7^{13}t^7+20\cdot7^{12}t^6+272\cdot7^{10}t^5+1893\cdot7^8t^4+435\cdot7^7t^3-2953\cdot7^5t^2-10620\cdot7^3t-27029\cdot7.}

\par\bigskip
\noindent\textbf{Group VI}\par\nobreak\medskip

\auxidentity{U_{B_7}(q)=1,}

\auxidentity{U_{B_7}(qt^{-1})=-7t-4,}

\auxidentity{U_{B_7}(qt^{-2})=-7^3t^2+20,}

\auxidentity{U_{B_7}(qt^{-3})=-7^5t^3-88,}

\auxidentity{U_{B_7}(qt^{-4})=-7^7t^4+260,}

\auxidentity{U_{B_7}(qt^{-5})=-7^9t^5+68\cdot7,}

\auxidentity{U_{B_7}(qt^{-6})=-7^{11}t^6-2392\cdot7.}

\subsection*{Acknowledgments} 

The authors thank Dandan Chen for some helpful comments related to her paper \cite{Chen25}.

\subsection*{Declarations} The authors make the following declarations.
\begin{enumerate}
\item The first two authors are supported by a Start-Up Grant from Ahmedabad University (Reference No.~URBSASI24A5). The third author is supported by a Junior Research Fellowship from the ANRF via a Core Research Grant (File
No.~CRG/2023/002698).
 \item The authors acknowledge the use of ChatGPT 6 Astra in revising the notation and improving the language of the manuscript. The authors take full responsibility for the mathematical content, accuracy, and integrity of the paper.
    \item The authors declare no conflicts of interest.
    \item The authors have no relevant financial or non-financial interests to disclose.
    \item There are no datasets associated with this study.
    \item All authors contributed to the proofs and to the final writing of the manuscript.
    \item All authors read and approved the final manuscript.
\end{enumerate}

\bibliographystyle{alpha}
\bibliography{ref}

@article{CCG24,
  author  = {Chen, Dandan and Chen, Rong and Garvan, Frank},
  title   = {Congruences modulo powers of 5 and 7 for the crank and rank
             parity functions and related mock theta-functions},
  journal = {The Ramanujan Journal},
  volume  = {70},
  number  = {2},
  year    = {2026},
  pages   = {Paper No. 25, 54 pp.},
  doi     = {10.1007/s11139-026-01391-9},
  url     = {https://doi.org/10.1007/s11139-026-01391-9}
}

@misc{Chen25,
  author        = {Chen, Dandan},
  title         = {Congruences modulo powers of 7 for the reciprocal crank
                   parity function},
  year          = {2025},
  eprint        = {2511.18905},
  archivePrefix = {arXiv},
  primaryClass  = {math.CO},
  note          = {arXiv preprint arXiv:2511.18905},
  doi           = {10.48550/arXiv.2511.18905},
  url           = {https://arxiv.org/abs/2511.18905}
}

@article{CL04,
  author  = {Corteel, Sylvie and Lovejoy, Jeremy},
  title   = {Overpartitions},
  journal = {Transactions of the American Mathematical Society},
  volume  = {356},
  number  = {4},
  year    = {2004},
  pages   = {1623--1635},
  doi     = {10.1090/S0002-9947-03-03328-2},
  url     = {https://doi.org/10.1090/S0002-9947-03-03328-2}
}

@article{HS25,
  author  = {Hirschhorn, Michael D. and Sellers, James A.},
  title   = {A family of congruences modulo 7 for partitions with
             monochromatic even parts and multi-colored odd parts},
  journal = {Australasian Journal of Combinatorics},
  volume  = {95},
  number  = {3},
  year    = {2026},
  pages   = {435--442},
  url     = {https://ajc.maths.uq.edu.au/?page=get_volumes&volume=95}
}

@article{PauleRadu2012,
  author  = {Paule, Peter and Radu, Cristian-Silviu},
  title   = {The {Andrews--Sellers} family of partition congruences},
  journal = {Advances in Mathematics},
  volume  = {230},
  number  = {3},
  year    = {2012},
  pages   = {819--838},
  doi     = {10.1016/j.aim.2012.02.026},
  url     = {https://doi.org/10.1016/j.aim.2012.02.026}
}

@book{Andrews1998,
  author    = {Andrews, George E.},
  title     = {The Theory of Partitions},
  series    = {Cambridge Mathematical Library},
  publisher = {Cambridge University Press},
  address   = {Cambridge},
  year      = {1998},
  isbn      = {978-0-521-63766-4},
  doi       = {10.1017/CBO9780511608650},
  url       = {https://doi.org/10.1017/CBO9780511608650}
}

@article{Dyson1944,
  author  = {Dyson, F. J.},
  title   = {Some guesses in the theory of partitions},
  journal = {Eureka},
  number  = {8},
  year    = {1944},
  pages   = {10--15}
}

@article{Ramanujan1919,
  author  = {Ramanujan, Srinivasa},
  title   = {Some properties of {$p(n)$}, the number of partitions of {$n$}},
  journal = {Proceedings of the Cambridge Philosophical Society},
  volume  = {19},
  year    = {1919},
  pages   = {207--210}
}

@article{AndrewsGarvan1988,
  author  = {Andrews, George E. and Garvan, Frank G.},
  title   = {{Dyson}'s crank of a partition},
  journal = {Bulletin of the American Mathematical Society (N.S.)},
  volume  = {18},
  number  = {2},
  year    = {1988},
  pages   = {167--171},
  doi     = {10.1090/S0273-0979-1988-15637-6},
  url     = {https://doi.org/10.1090/S0273-0979-1988-15637-6}
}

@incollection{Garvan1988,
  author    = {Garvan, F. G.},
  title     = {Combinatorial interpretations of {Ramanujan}'s partition
               congruences},
  booktitle = {Ramanujan Revisited},
  publisher = {Academic Press},
  address   = {Boston, MA},
  year      = {1988},
  pages     = {29--45},
  isbn      = {0-12-058560-X}
}

@article{AmdeberhanMerca2025,
  author  = {Amdeberhan, Tewodros and Merca, Mircea},
  title   = {From crank to congruences},
  journal = {Mediterranean Journal of Mathematics},
  volume  = {22},
  number  = {8},
  year    = {2025},
  pages   = {Paper No. 190, 22 pp.},
  doi     = {10.1007/s00009-025-02955-7},
  url     = {https://doi.org/10.1007/s00009-025-02955-7}
}

@article{Maier2009,
  author        = {Maier, Robert S.},
  title         = {On rationally parametrized modular equations},
  journal       = {Journal of the Ramanujan Mathematical Society},
  volume        = {24},
  number        = {1},
  year          = {2009},
  pages         = {1--73},
  eprint        = {math/0611041},
  archivePrefix = {arXiv},
  primaryClass  = {math.NT},
  url           = {https://jrms.ramanujanmathsociety.org/archieves/v24-1.html}
}

@article{AtkinLehner1970,
  author  = {Atkin, A. O. L. and Lehner, J.},
  title   = {{Hecke} operators on {$\Gamma_0(m)$}},
  journal = {Mathematische Annalen},
  volume  = {185},
  year    = {1970},
  pages   = {134--160},
  doi     = {10.1007/BF01359701},
  url     = {https://doi.org/10.1007/BF01359701}
}

@incollection{GarvanQseries,
  author    = {Garvan, Frank},
  title     = {A {$q$}-product tutorial for a {$q$}-series {Maple} package},
  editor    = {Foata, Dominique and Han, Guo-Niu},
  booktitle = {The Andrews Festschrift: Seventeen Papers on Classical
               Number Theory and Combinatorics},
  publisher = {Springer},
  address   = {Berlin, Heidelberg},
  year      = {2001},
  pages     = {111--138},
  doi       = {10.1007/978-3-642-56513-7_4},
  url       = {https://doi.org/10.1007/978-3-642-56513-7_4}
}

@article{JSF,
  author        = {Johnson, Anjelin Mariya and Sellers, James A.
                   and Fathima, S. N.},
  title         = {Additional congruences for generalized color
                   partitions of {Hirschhorn} and {Sellers}},
  journal       = {Arabian Journal of Mathematics},
  year          = {forthcoming},
  eprint        = {2510.25250},
  archivePrefix = {arXiv},
  primaryClass  = {math.NT},
  url           = {https://arxiv.org/abs/2510.25250}
}

@misc{TSF26,
  author        = {Thejitha, M. P. and Sellers, James A.
                   and Fathima, S. N.},
  title         = {Arithmetic properties of colored partitions
                   restricted by parity of the parts},
  year          = {2026},
  howpublished  = {arXiv preprint arXiv:2601.15680},
  eprint        = {2601.15680},
  archivePrefix = {arXiv},
  primaryClass  = {math.CO},
  doi           = {10.48550/arXiv.2601.15680},
  url           = {https://arxiv.org/abs/2601.15680}
}

@misc{JS26,
  author        = {Jameson, Marie and Sellers, James A.},
  title         = {A uniform proof approach for congruences modulo {3}
                   for partitions with {$k$}-colored odd parts},
  year          = {2026},
  howpublished  = {arXiv preprint arXiv:2609.19440},
  eprint        = {2609.19440},
  archivePrefix = {arXiv},
  primaryClass  = {math.NT},
  doi           = {10.48550/arXiv.2609.19440},
  url           = {https://arxiv.org/abs/2609.19440}
}

@misc{GarvanETA,
  author        = {Garvan, Frank},
  title         = {A tutorial for the {MAPLE} {ETA} package},
  year          = {2019},
  eprint        = {1907.09130},
  archivePrefix = {arXiv},
  primaryClass  = {math.CO},
  note          = {arXiv preprint arXiv:1907.09130},
  doi           = {10.48550/arXiv.1907.09130},
  url           = {https://arxiv.org/abs/1907.09130}
}

\end{document}